\newtheorem{theorem}{Theorem}[section]
\newtheorem{lemma}[theorem]{Lemma}
\newtheorem{example}[theorem]{Example}
\newcommand{\subscr}[2]{#1_{\textup{#2}}}
\newcommand{\until}[1]{\{1,\dots,#1\}}
\newcommand{\tvd}[2]{TVD-\uppercase{#1}$_{#2}$}
\newcommand{\real}{{\mathbb{R}}}
\newcommand{\realpositive}{{\mathbb{R}}_{>0}}
\newcommand{\realnonnegative}{{\mathbb{R}}_{\ge 0}}
\newcommand{\Ac}{\mathcal{A}}
\newcommand{\Af}{\mathsf{A}}
\newcommand{\longthmtitle}[1]{\mbox{}{\textit{(#1)}}:\xspace}
\newcommand\oprocendsymbol{\hbox{$\bullet$}}
\newcommand\oprocend{\relax\ifmmode\else\unskip\hfill\fi\oprocendsymbol}
\title{\LARGE \bf Distributed Time-Varying Coverage Control via
  Singular Perturbations}
\author{Brandon Bao \quad Jorge Cort\'es \quad Sonia Mart{\'\i}nez%
  \thanks{The authors are with the Contextual Robotics Institute at UC
    San Diego, USA (emails:bbao200905@gmail.com, cortes@ucsd.edu;
    soniamd@ucsd.edu).}  \thanks{The authors would like to thank to
    Victor Gandarillas and Melcior Pijoan for early work on the
    simulations in this manuscript. }}%
\begin{document}

\maketitle

%%%%%%%%%%%%%%%%%%%%%%%%%%%%%%%%%%%%%%%%%%%%%%%%%%%%%%%%%%%%%%%%%%%%%%%%%%%%%%%%
\begin{abstract}
  This paper presents a novel dynamic coverage control algorithm
  allowing a group of robots to track an optimal-deployment
  configuration for arbitrary time-varying density functions. Building
  on singular perturbation theory, the proposed design employs a
  two-time scale separation approach, with a fast time scale
  corresponding to communication and a slow time scale corresponding
  to agent motion. The resulting algorithm is distributed over the
  2-hop Delaunay graph and, for small enough values of the
  perturbation parameter, achieves the same performance as its
  centralized counterpart.  We also introduce three discrete-time
  versions that rely only on 1-hop communication at the cost of having
  to use delayed information and formally establish their asymptotic
  convergence properties.  Our technical approach combines
  computational geometry, singular perturbation theory, generating
  functions, and linear iterations with delayed updates.  Various
  simulations illustrate the performance of the proposed algorithms.
\end{abstract}

%%%%%%%%%%%%%%%%%%%%%%%%%%%%%%%%%%%%%%%%%%%%%%%%%%%%%%%%%%%%%%%%%%%%%%%%%%%%%%%%
\section{Introduction}\label{sec: intro}

Coverage control \cite{JC-SM-TK-FB:02j,FB-JC-SM:09} is a coordination
task where a multi-agent system is optimally deployed over a spatial
domain to provide for a service. A well-established approach to
achieving this task employs the so-called Lloyd's algorithm, which
drives the agents to specific spatial locations according to a static
density function. This density function is used to model e.g., the
most probable locations of targets, general external-service demands,
or human-generated commands. As practical applications involve
revising priorities when new information is revealed, a solution to a
dynamic version of this problem becomes relevant. However, finding a
distributed algorithm to dynamic coverage control becomes challenging,
as the natural extension of Lloyd's algorithm requires all-to-all
communication among the agents.  In this note, we propose a solution
to circumvent this problem via singular perturbation theory.

\emph{Background.}  Coverage control problems have received wide
attention in the literature. The work~\cite{JC-SM-TK-FB:02j}
formulates it as a locational optimization
problem~\cite{AO-BB-KS-SNC:00,ZD-HWH:01} where the objective function
is minimized via a gradient descent algorithm that leads to the
Lloyd's algorithm. The algorithm is naturally decentralized over the
so-called Delaunay graph, which is the dual of the Voronoi partition
associated with the robots' positions. By means of it, each agent
sequentially updates its region, then its position to approach its
centroid. The algorithm is robust to errors, works under asynchronous
interactions, and adapts to agent arrivals and departures. The Lloyd's
algorithm converges to the set of Centroidal Voronoi Configurations
(CVT) for static densities, which correspond to critical points of the
objective function.

The dynamic coverage problem was initially studied
in~\cite{JC-SM-TK-FB:02b} by assuming a constraint on the time-varying
density (TVD) function; namely, that the cost covered by an agent in a
CVT configuration remains constant when the density evolves in
time. Furthermore, \cite{LP-MS-QL-VK-DR-RCM-GAP:08} provided
simulations of a dynamic coverage control law that tracked Voronoi
partition centroids over time. The work~\cite{SL-YMD-ME:15} proposed a
more general and less density restrictive approach, where time-varying
coverage is achieved by following a CVT over time. This work showed
that all-to-all communication is needed to achieve perfect tracking
performance, but
nonetheless proposed a power series approximation that generates a
trade-off between tracking and decentralization. Considering more
terms of the power series results in a better approximation of the
centralized control law, at the expense of communication with agents
that are farther-hops away.  Subsequent work extended TVD methods to
evolving environments~\cite{XX-YMD:20,ET-RA-GLN:19}.

In our technical approach, we rely on singular perturbation
theory~\cite{PK-HK-JR:86,HKK:02}.  These methods have been used in
other multi-agent control problems, such as dynamic average consensus
\cite{SK-JC-SM:ecc-13}, distributed
optimization~\cite{FZ-DV-AC-GP-LS:16}, and networked dynamical
systems\cite{AC-MM:16}, but have not been employed in the context of
coverage control. We are also inspired by the analysis of
discrete-time stochastic gradient iterations with delayed updates
presented in~\cite{YA-OS-NS:20}, in particular the use of generating
functions and formal power series~\cite{HSW:94,PF-RS:09}.

\emph{Statement of Contributions.}
Our main contribution is the design of a new motion coordination
algorithm to solve dynamic coverage control problems. Notably, the
performance of the proposed algorithmic solution is close to its
centralized counterpart to an arbitrary degree of accuracy while
maintaining decentralization. To this end, we rely on singular
perturbation theory and employ a two time-scale separation approach:
in this way, a slow time-scale dynamics realizes the agents' motion,
while the fast time-scale dynamics is used to update the agents'
controls via a distributed gradient algorithm that solves a
least-squares problem.  The resulting distributed coordination
algorithm relies on communication with 2-hop neighbors in the Delaunay
graph and can be implemented in either continuous- or discrete-time.
To further keep interactions limited among agents, we propose
discrete-time versions that rely only on 1-hop communication at the
cost of having to use information delayed by one timestep, resulting
in the all-neighbors- (delayed information from both 1-hop and 2-hop
neighbors), $2\setminus \! 1$-neighbors- (delayed information from
2-hop neighbors that are not 1-hop neighbors), and
$2$-neighbors-delayed (delayed information from all 2-hop neighbors)
variants.  We formally establish their asymptotic convergence
guarantees and evaluate their performance in simulation.  The obtained
results confirm that all algorithms converge, and that those that
employ fresher information result in faster convergence.  We also run
comparisons with respect to previous baselines and implement it on a
group of differential-drive robots in the Robotarium testbed.

\section{Preliminaries}\label{sec: notation}
This section introduces basic notation and provides a quick review of
singular perturbation theory.

\emph{Voronoi Partitions and Delaunay Graph.}
We denote by $\real^d$ the $d$-dimensional real vector space and its
Euclidean 2-norm by $\|\cdot\|$.
Consider a multi-agent system
composed of $n$ agents (or sensors) with positions $p_i\in \real^d$,
for $i \in\until{n}$, in a convex set $Q \subset \real^d$. For
convenience, we denote by $p = (p_1,\dots,p_n) \in \real^{nd}$ the
vector comprising the positions of all the agents. Given
$p \in \real^{nd}$, the standard Voronoi partition of $Q$ is given by
$\mathcal{V}(p) = (V_1(p), \dots , V_n(p))$ such that, for each
$i \in \until{n}$:
\begin{equation}
  \label{def: voronoi}
  V_i (p) = \{q \in Q: \|q- p_i\|
  \leq \|q- p_j\|, \forall j \neq i\}.
\end{equation}
Each Voronoi region or cell contains $p_i \in V_i(p)$,
$i \in \until{n}$, and as a partition, it holds that
$\cup_{i=1}^n V_i(p) = Q$. Observe that $V_i \cap V_j \neq \emptyset$
only at their boundaries and, if this is the case, we say that agents
$i,j$ are Voronoi (or Delaunay) neighbors. The set of neighbors of an
agent $i$ in the Delaunay graph is denoted by $N_i$, and does not
include $i$. In the particular case that $Q$ is a polyhedron, the
Voronoi cells are polyhedra and can be computed easily. We refer the
reader to \cite{MdB-OC-MvK-MO:97,AO-BB-KS-SNC:00} for further
treatment of Voronoi diagrams.

\emph{Singular Perturbation Theory.}
We provide here a brief description of singular perturbation theory
following~\cite{HKK:02,PK-HK-JR:86}.
Consider a dynamical system of the form
\begin{subequations}\label{eq: diff eqs}
  \begin{align}
    \dot{p}          & = f(p,u,t),  \label{eq: model:a}
    \\
    \epsilon \dot{u} & = g(p,u,t), \label{eq: model:b}
  \end{align}
\end{subequations}
with $p(t_0) = p_0$ and where $0 < \epsilon \ll 1$ is a small
parameter.
This system is singularly perturbed because the parameter $\epsilon$
multiplies the derivative of $u$ in the dynamics. As a result, the
solution in the limit $\epsilon = 0$ may approximate those
of~\eqref{eq: diff eqs} for small $\epsilon$ only under some
conditions. More precisely, taking $\epsilon = 0$ in~\eqref{eq: diff
  eqs}, we obtain the reduced model:
\begin{subequations} \label{eq: reduced model}
  \begin{align}
    \dot{p} & = f(p,u,t), \label{eq: reduced model:a}   \\
    0       & = g(p,u,t). \label{eq: reduced model:b}
  \end{align}
\end{subequations}
Under the implicit function theorem, it is possible to solve for $u$
in the last algebraic equation, and obtain an expression of a root as
$u = h(p,t)$. Plugging it in~\eqref{eq: reduced model:a} yields
$\bar{p}(t)$ as the solution to
\begin{equation}\label{eq:reduced-model-with-root}
  \dot{\bar{p}} = f(\bar{p}, h(\bar{p},t), t),
\end{equation}
with $\bar{p}(0)=p_0$.  This equation is called the \textit{quasi
  steady-state model} or \textit{slow model}.
Complementary to this, define a fast time scale
$(t-t_0)/\epsilon = \eta $, and rewrite~\eqref{eq: diff eqs} as
\begin{align*}
  \frac{dp}{d\eta} & = \epsilon f(p,u, t_0+\epsilon \eta),
  \\
  \frac{du}{d\eta} & = g(p,u,t_0+\epsilon \eta). 
\end{align*}
As $\epsilon \rightarrow 0^+$ above, meaning that
$t = t_0 + \epsilon \eta \approx t_0$ and
$p = p(t_0 + \epsilon \eta, \epsilon) \approx p_0$ (i.e., the
variables $t$ and $p$ become slowly varying as $\epsilon \approx 0$),
we obtain the so-called \textit{boundary layer dynamics} given by:
\begin{subequations} \label{eq: boundary layer}
  \begin{align}
    \frac{dp}{d\eta} & = 0, \label{eq: boundary layer:a}         \\
    \frac{du}{d\eta} & = g(p,u,t_0). \label{eq: boundary layer:b}
  \end{align}
\end{subequations}
Further, by taking $u = y + h(p,t)$, and using that
$\displaystyle{\frac{\partial h}{\partial p} \frac{dp }{d\eta}} = 0$,
the boundary layer dynamics becomes:
\begin{align}\label{eq:boundarylayer2}
  \frac{dy}{d\eta} & = g(p,y+h(p,t_0),t_0). 
\end{align}
In this way, as $\epsilon \rightarrow 0^+$ the dynamics of
$\bar{p}(t)$ and $y(t/\epsilon)$, the slow model and the boundary
layer model, become decoupled.

A powerful tool to study~\eqref{eq: diff eqs} via boundary layer and
slow model dynamics is the non-smooth Tikhonov's Theorem, which we
recall next. 

\begin{theorem}[Tikhonov’s theorem~\cite{VV:97}]\label{th:Tikhonov}
  Assume that $u= h(p,t)$ is an isolated root of the
  equation~\eqref{eq: reduced model:b}, and that the functions $f$,
  $g$, and $h$ are Lipschitz, and $g$ is bounded. Assume that the
  origin is a uniformly, asymptotically stable equilibrium of the
  dynamics~\eqref{eq:reduced-model-with-root}, and that the boundary
  layer dynamics~\eqref{eq:boundarylayer2} is globally attractive for
  any fixed slow variable; see~\cite[Prop.~1]{VV:97}. Then, there is
  $\epsilon^*>0$, and compact sets $\Omega_p$ and $\Omega_u$ such
  that, for all $t_0 \ge 0$, $p_0 \in \Omega_p$,
  $y_0 = u_0-h(p_0,t_0,) \in \Omega_u$, and
  $0 < \epsilon <\epsilon^*$, there is a unique solution of the
  dynamic system \eqref{eq: diff eqs}, $p(t,\epsilon)$
  (resp.~$u(t,\epsilon)$) on $[0,\infty)$, which is uniformly
  approximated by the solution of the reduced
  model~\eqref{eq:reduced-model-with-root}, $\bar{p}(t)$, with initial
  condition $p_0$, as $\|p(t,\epsilon) - \bar{p}(t)\| = O(\epsilon)$
  (resp.~by the solution $\hat{y}(t/\epsilon)$
  to~\eqref{eq:boundarylayer2} with initial condition
  $y_0 = h(p_0,t_0,) + u_0$, as
  $\|u(t,\epsilon) - h(\bar{p}(t),t) - \hat{y}(t/\epsilon)\| =
  O(\epsilon)$), as $\epsilon \rightarrow 0^+$. Moreover, there is a
  $t_b > t_0$ and $\bar{\epsilon}^{*}\le \epsilon^*$ such that
  $\|u(t,\epsilon) - h(\bar{p}(t),t)\| = O(\epsilon)$ uniformly on
  $[t_b,\infty)$ for $\epsilon < \bar{\epsilon}^{*}$.
\end{theorem}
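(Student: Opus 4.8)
This is the nonsmooth Tikhonov theorem, so a proof would follow the classical two-time-scale Lyapunov argument, adapted from the $C^1$ setting to merely Lipschitz data. First I would pass to the error coordinate $y = u - h(p,t)$, which moves the quasi-steady-state manifold to $\{y=0\}$; the system then reads $\dot p = f(p,y+h(p,t),t)$ and $\epsilon \dot y = g(p,y+h(p,t),t) - \epsilon \frac{\partial h}{\partial p} f - \epsilon \frac{\partial h}{\partial t}$, where the derivatives of $h$ are understood through Clarke's generalized gradient and the last two terms are $O(\epsilon)$ because $\dot p = f$ is bounded. In these coordinates the boundary-layer system \eqref{eq:boundarylayer2} has the origin as a globally attractive equilibrium, uniformly in the frozen slow data $(p,t_0)$. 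I would then split $[t_0,\infty)$ into a \emph{boundary-layer phase} $[t_0,t_b]$, on which $y$ collapses toward a small ball around the origin while $p$ moves only $O(\epsilon)$, and a \emph{slow phase} $[t_b,\infty)$, on which the trajectory is trapped in an $O(\epsilon)$ tube around $\{y=0\}$ and $p$ tracks $\bar p$.

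Next I would produce Lyapunov certificates for the two limiting systems. From the uniform asymptotic stability of the origin of the reduced model \eqref{eq:reduced-model-with-root}, a nonsmooth converse Lyapunov theorem gives a locally Lipschitz $V(p,t)$ with class-$\mathcal{K}$ bounds whose upper Dini derivative along \eqref{eq:reduced-model-with-root} is negative definite; from the global attractivity of \eqref{eq:boundarylayer2}, a second converse result gives a locally Lipschitz $W(y,p,t_0)$, parametrized by the frozen slow data, decreasing along the fast flow. The Lipschitz continuity of $f,g,h$ and the boundedness of $g$ are what make these certificates uniform in the parameters and what bound the interconnection and $O(\epsilon)$ perturbation terms entering the $y$-equation.

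Then, on $[t_0,t_b]$ I would compare the true fast trajectory with the solution $\hat y(t/\epsilon)$ of \eqref{eq:boundarylayer2}: rewriting in the stretched time $\eta = (t-t_0)/\epsilon$, the two right-hand sides differ by $O(\epsilon)$ (using the Lipschitz bounds and $\|p(t,\epsilon)-p_0\| = O(\epsilon)$), so Gronwall on the finite $\eta$-horizon yields $\|y(t,\epsilon) - \hat y(t/\epsilon)\| = O(\epsilon)$, and with $h(\bar p(t),t) = h(p_0,t_0) + O(\epsilon)$ on $[t_0,t_b]$ this is the claimed $u$-approximation; using $W$ and global attractivity I would choose $t_b$ (independent of $\epsilon$) so that $\|y\|$ has entered a prescribed small ball by $t_b$. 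On $[t_b,\infty)$ I would form the composite $\nu = V(p,t) + \lambda W(y,p,t)$ for a suitable weight $\lambda > 0$ and show, via the chain rule for Dini derivatives and the Step-2 interconnection bounds, that its upper Dini derivative along the full system is negative outside an $O(\epsilon)$ neighborhood of $(\bar p(t),0)$ once $\epsilon < \epsilon^*$. This traps the trajectory, gives forward completeness (hence existence on $[0,\infty)$) and $\|y(t,\epsilon)\| = O(\epsilon)$ uniformly on $[t_b,\infty)$, i.e. $\|u(t,\epsilon) - h(\bar p(t),t)\| = O(\epsilon)$. Finally, from $\frac{d}{dt}\|p(t,\epsilon)-\bar p(t)\| \le L\|p(t,\epsilon)-\bar p(t)\| + L\|y(t,\epsilon)\|$, feeding in the $O(\epsilon)$ bound on $\|y\|$ (the boundary-layer contribution entering only through its convolution with $e^{-L(t-s)}$, which is also $O(\epsilon)$), Gronwall yields $\|p(t,\epsilon)-\bar p(t)\| = O(\epsilon)$ uniformly; uniqueness on $[0,\infty)$ then follows from the Lipschitz continuity of the vector field on the invariant compact tube.

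The hard part is the lack of smoothness: with only Lipschitz $f,g,h$, the Lyapunov functions are merely locally Lipschitz, so every differentiation along a trajectory must be replaced by an upper Dini (or Clarke) derivative estimate, and the manipulation ``$\frac{\partial h}{\partial p}\frac{dp}{d\eta} = 0$'' used to derive \eqref{eq:boundarylayer2} must be justified set-valuedly. Equally delicate is the quantitative matching of the two phases: global attractivity of the boundary layer furnishes only a class-$\mathcal{KL}$ decay, so one must choose $t_b$, the ball radii, and the weight $\lambda$ so that the composite estimate closes uniformly in $\epsilon$ — this bookkeeping, rather than any single inequality, is the real crux, and is precisely what \cite{VV:97} carries out.
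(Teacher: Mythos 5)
This theorem is not proved in the paper: it is imported verbatim from Veliov \cite{VV:97} (a nonsmooth generalization of Tikhonov's theorem for singularly perturbed differential inclusions), so there is no in-paper proof to compare your argument against. Your outline is the classical two-time-scale Lyapunov reconstruction --- coordinate shift $y = u - h(p,t)$, boundary-layer phase plus slow phase, converse Lyapunov functions for each limiting system, a composite certificate $V + \lambda W$, and Gr\"onwall for the $p$-estimate --- which is essentially the proof of Theorems 11.1--11.2 in Khalil \cite{HKK:02} adapted to Lipschitz data via Dini/Clarke derivatives. That is a legitimate route, but it is not the route of the cited source: Veliov works with differential inclusions and establishes the approximation through stability and compactness/upper-semicontinuity arguments on solution sets rather than through explicit composite Lyapunov functions, which is what lets him get away with merely Lipschitz (indeed set-valued) right-hand sides without constructing nonsmooth converse Lyapunov certificates.

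The one place where your sketch has a genuine soft spot rather than mere bookkeeping is the claimed \emph{uniform-in-time} $O(\epsilon)$ bound on $[t_b,\infty)$. The hypotheses grant only uniform asymptotic stability of the reduced model and global attractivity of the boundary layer, not exponential stability; a composite Lyapunov argument built from class-$\mathcal{K}$/class-$\mathcal{KL}$ certificates then traps the trajectory in a neighborhood whose radius is some class-$\mathcal{K}$ function of $\epsilon$, but does not by itself deliver the linear rate $O(\epsilon)$. In the smooth theory the infinite-horizon $O(\epsilon)$ conclusion (Khalil Thm.~11.2) is obtained under \emph{exponential} stability of both limiting systems, precisely so that the Lyapunov functions satisfy quadratic-type sandwich bounds and the perturbation terms can be absorbed linearly. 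To close your argument as stated you would either need to strengthen the stability hypotheses locally (exponential near the equilibrium/manifold), or replace the composite-Lyapunov step by the kind of limiting argument Veliov actually uses. Your final Gr\"onwall step for $\|p - \bar p\|$ also needs care: the inequality $\frac{d}{dt}\|p-\bar p\| \le L\|p-\bar p\| + L\|y\|$ integrated over an unbounded horizon gives a bound growing like $e^{Lt}$ unless you again invoke the (uniform, exponential-type) stability of the reduced flow to replace the crude Lipschitz constant $L$ by a contractive comparison system.
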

\smallskip

This result plays a key role in our ensuing technical analysis of the
convergence properties of decentralized time-varying coverage control.

%%%%%%%%%%%%%%%%%%%%%%%%%%%%%%%%%%%%%%%%%%%%%%%%%%%%%%%%%%%%%%%%%%%%%%%%%%%%%%%%
\section{Problem Statement}\label{sec: problem def}

In this section, we define the dynamic coverage control problem that
we aim to solve.  Consider a time-varying density function
$\phi:Q \times \realnonnegative \rightarrow \realpositive$, defined
over a convex polygonal region $Q \subset \real^d$, for each
$t \in \realnonnegative$. This density function models the importance
of each point in space at each time instant. Define the coverage
control metric
\begin{equation}\label{eq: location cost}
  H(p,t) = \int_Q \min_{i=1,\dots, n} \|q-p_i\|^2 \phi(q,t)  dq.
\end{equation}
At any given time $t$, we say that $Q$ is optimally covered by the
agents located at $p(t) \in \real^{nd}$ if $p(t)$ is a minimizer
of~\eqref{eq: location cost}. It can be shown that local minima
consist of centroidal Voronoi configurations defined as follows.  Let
the partition of $Q$ be the Voronoi~\eqref{def: voronoi} tessellation
$\mathcal{V} (p) = (V_1(p),\dots, V_n(p))$. With this choice of
partition, we simplify the cost as
\begin{equation}\label{def: H}
  H(p,t) = \sum_{i = 1}^n \int_{V_i(p)}
  \|q-p_i\|^2  \phi(q,t) dq.
\end{equation}
Following~\cite{JC-SM-TK-FB:02j}, the partial derivatives of $H$
wrt $p_i$ are
\begin{equation} \label{def: dHdp}
  \frac{\partial H}{\partial p_i} = \int_{V_i(p)} -2(q-p_i)^\top \phi
  (q,t) dq.
\end{equation}
This expression can be further simplified as follows. Define the mass
and centroid of Voronoi region $V_i(p)$ as
\begin{equation}\label{def: mi, ci}
  m_i = \int_{V_i(p)} \phi (q,t)
  dq, \quad c_i = \frac{1}{m_i} \int_{V_i(p)} q \phi(q,t) dq.
\end{equation}
Then, it holds that
\begin{equation}\label{def: dHdp 2}
  \frac{\partial H}{\partial p_i} =  -2m_i(p_i-c_i)^\top,
\end{equation}
which leads to the characterization of critical points
\begin{equation}\label{eq: critical point}
  p_i(t) = c_i(p,t),\quad i \in \until{n}, \; \forall \, t \ge 0.
\end{equation}
The locations $p(t)$ and Voronoi partition for which these conditions
hold result into Centroidal Voronoi Tessellations
(CVT).
In \cite{JC-SM-TK-FB:02j,JC-SM-TK-FB:02b}, the stability and
convergence to CVT for static densities (i.e., $\phi(q,t) = \phi(q)$)
were studied via asynchronous implementations of the celebrated
Lloyd's algorithm,
\begin{equation}\label{eq: lloyd}
  \dot{p}_i (t) = - \kappa(p_i(t)-c_i(p)),\quad i \in \until{n},
\end{equation}
where $\kappa > 0$ is a proportional gain.  Note that the
dynamics~\eqref{eq: lloyd} is distributed in the sense of the Delaunay
graph~\cite{FB-JC-SM:09}.  However, for dynamic densities,~\eqref{eq:
  lloyd} does not generally guarantee convergence to CVT because it
does not account for the effect of the time-dependency of the
evolution of centroid locations.
Our aim in this paper is to synthesize a novel distributed algorithm
that guarantees $\|p_i(t) - c_i(p,t)\|\rightarrow 0$ as
$t \rightarrow \infty$, for all $i \in \until{n}$.

%%%%%%%%%%%%%%%%%%%%%%%%%%%%%%%%%%%%%%%%%%%%%%%%%%%%%%%%%%%%%%%%%%%%%%%%%%%%%%%%
\section{Time-varying Coverage Control via Singular
  Perturbations}\label{sec: sing pert} 
In this section, we propose a novel coordination algorithm to solve
the time-varying coverage control problem using ideas from singular
perturbation theory.  Consider the dynamics
\begin{equation}\label{eq: TVD-C}
  \left(I - \frac{\partial c}{\partial p} \right) \dot{p} = - \kappa
  (p - c) + \frac{\partial c}{\partial t},
\end{equation}
where $c = (c_1,\dots,c_n)\in \real^{nd}$ denotes the collection of
centroids and $I$ is the $nd \times nd$ identity matrix. Note that,
under~\eqref{eq: TVD-C}, one can easily show that $\|p(t) - c(p,t)\|$
decreases exponentially fast (with rate $\kappa$) to $0$. However,
implementing~\eqref{eq: TVD-C} requires computing the inverse of the
matrix
$\left(I - \frac{\partial c}{\partial p} \right) \in \real^{nd\times
  nd}$, which is non-sparse, and this results in a a centralized
dynamic coverage control algorithm, which we denote \tvd{c}{}. We note
that the matrix is in general invertible when agents are close to
their centroids, but it can be ill-conditioned.

One way to get around the centralized computation problem is to resort
to a Neumann series approximation $(I-A)^{-1} = I + A + A^2 + \dots$
of the inverse $\big(I - \frac{\partial c}{\partial p} \big)^{-1}$,
and multiply both sides of~\eqref{eq: TVD-C} with it to obtain an
approximation. This approach, proposed in \cite{SL-YMD-ME:15}, results
in the class of algorithms denoted by \tvd{d}{k}, where $k+1$ is the
number of terms in the series expansion. The accuracy of the
approximation increases with~$k$, but so does the communication
overhead of executing the resulting coordination algorithm, as
individual agents require information from their $k$-hop neighbors.

Here, we propose an exact reformulation of~\eqref{eq: TVD-C} based on
singular perturbation theory that is amenable to distributed
computation. Consider the control $u$ that results from solving a
least-squares problem optimizing the difference between the left and
right-hand sides of \eqref{eq: TVD-C}. Formally, choose $u$ minimizing
\begin{equation}\label{eq: perturbation objective}
  F(u) = \frac{1}{2}\left\|\left( I - \frac{\partial c}{\partial
    p} \right) u - \left( -\kappa (p - c) + \frac{\partial c}{\partial
    t} \right) \right\|^2.
\end{equation}
We design a dynamics that finds this $u$ from any arbitrary initial
condition by following a gradient descent scheme. Note that 
\small
\begin{equation*}
  \nabla F(u) = \Big( I - \frac{\partial c}{\partial p} \Big)^\top
  \Big( \! \Big( I - \frac{\partial c}{\partial p} \Big) u - \Big(
  -\kappa (p - c) + \frac{\partial c}{\partial t} \Big)\! \Big).
\end{equation*}
\normalsize The component of the gradient that corresponds to agent
$i$ can be computed with information provided by 2-hop neighbors in
the Delaunay graph, and hence the resulting dynamics can be
implemented in a distributed fashion. Our basic idea is then to have
this dynamics, which only involves peer-to-peer communication run at a
much faster time timescale than the physical evolution of the
agents. Formally, we write
\begin{subequations}\label{eq: TVD-SP}
  \begin{align}
    \dot{p} & = u,
    \\
    \epsilon \dot{u} & = - \nabla F(u).
  \end{align}  
\end{subequations}
We refer to this algorithm as \tvd{sp}{\epsilon}, where
SP$_\epsilon$ comes from the singularly perturbed dynamics with
parameter $0 < \epsilon \ll 1$. The following result characterizes the
algorithm convergence properties.

\begin{theorem}\label{th: main result}
  Assume
  $I - \frac{\partial c}{\partial p} (p,t) \in \real^{nd\times nd}$ is
  invertible and its inverse is locally Lipschitz for all $t$. Then, a
  unique solution $\bar{p}(t)$ of \tvd{c}{} exists and asymptotically
  converges to $c(t)$ exponentially fast. In addition, there exists
  $\epsilon^*>0$ such that, for all $\epsilon< \epsilon^*$, the unique
  solution $p(t,\epsilon)$ to \tvd{sp}{\epsilon} approaches
  $\bar{p}(t)$ uniformly in time, i.e.,
  $\| p(t,\epsilon) - \bar{p}(t)\| = O(\epsilon)$, uniformly in
  $t\ge 0$, as $\epsilon \rightarrow 0^+$.
\end{theorem}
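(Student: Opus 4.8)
\emph{Overall approach and Part 1.} The plan is to prove the first claim directly and to obtain the second by recognizing \tvd{c}{} as the slow (quasi-steady-state) model of \tvd{sp}{\epsilon} and invoking the non-smooth Tikhonov's theorem, Theorem~\ref{th:Tikhonov}. Write $M(p,t) := I - \frac{\partial c}{\partial p}(p,t)$. Since $M$ is invertible with locally Lipschitz inverse, \eqref{eq: TVD-C} is equivalent to $\dot p = M(p,t)^{-1}\bigl(-\kappa(p-c) + \frac{\partial c}{\partial t}\bigr)$, whose right-hand side is locally Lipschitz in $p$ and continuous in $t$ --- here using that the centroid map $c$ and its partials $\frac{\partial c}{\partial p}$, $\frac{\partial c}{\partial t}$ inherit sufficient regularity from $\phi$ and the non-degeneracy of the Voronoi partition --- so a unique local solution exists. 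For the global solution and convergence, set $e := p - c(p,t)$; the chain rule gives $\dot c = \frac{\partial c}{\partial p}\dot p + \frac{\partial c}{\partial t}$, hence $\dot e = M(p,t)\dot p - \frac{\partial c}{\partial t}$, and substituting \eqref{eq: TVD-C} collapses this to $\dot e = -\kappa e$. Thus $e(t) = e^{-\kappa t}e(0) \to 0$ exponentially; since each $c_i(p,t) \in Q$ is bounded, $p(t) = e(t) + c(t)$ stays bounded, the solution extends to $[0,\infty)$, and $\|p(t) - c(t)\| \to 0$ with rate $\kappa$.

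\emph{\tvd{c}{} as the slow model.} Put \eqref{eq: TVD-SP} in the form \eqref{eq: diff eqs} with $f(p,u,t) = u$ and $g(p,u,t) = -\nabla F(u) = -M^\top\bigl(Mu - b\bigr)$, where $b := -\kappa(p-c) + \frac{\partial c}{\partial t}$. Because $M$ is invertible, $M^\top M$ is positive definite, so $F$ is strictly convex and the algebraic equation $\nabla F(u) = 0$ has the unique, hence isolated, root $u = h(p,t) := M(p,t)^{-1}b(p,t)$. Substituting this into the reduced model $\dot{\bar p} = f(\bar p, h(\bar p,t), t) = h(\bar p,t)$ recovers precisely \tvd{c}{}. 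To match Tikhonov's hypothesis that the origin be an equilibrium, I pass to the coordinate $e = p - c(p,t)$ --- a local diffeomorphism in $p$ since $M$ is invertible, and one that preserves the two-time-scale structure because it involves only the slow variable and $t$ --- in which the reduced dynamics becomes $\dot{\bar e} = -\kappa\bar e$, with the origin uniformly (indeed globally exponentially) asymptotically stable.

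\emph{Boundary layer and conclusion.} For the boundary layer dynamics \eqref{eq:boundarylayer2}, writing $y = u - h$ and using $Mh = b$ yields $\frac{dy}{d\eta} = g(p, y + h(p,t_0), t_0) = -M^\top M\, y$, a linear system with $M^\top M$ positive definite, hence globally exponentially stable and therefore globally attractive for every frozen slow variable. The data $f$, $g$, $h$ of the transformed system are Lipschitz, and $g$ is bounded, on the relevant compact sets: $f$ is built from $M$ and $\frac{\partial c}{\partial t}$, while $h$ and $g$ are built from $M^{-1}$, $c$, $\frac{\partial c}{\partial p}$, $\frac{\partial c}{\partial t}$, all Lipschitz there, and since the fast subsystem keeps $u$ in a compact ball the affine map $g$ is bounded on it. Theorem~\ref{th:Tikhonov} then furnishes $\epsilon^* > 0$ and compact sets such that, for $\epsilon < \epsilon^*$, \tvd{sp}{\epsilon} admits a unique solution on $[0,\infty)$ with $\|e(t,\epsilon) - \bar e(t)\| = O(\epsilon)$ uniformly in $t \ge 0$; transferring through the bi-Lipschitz change of variables gives $\|p(t,\epsilon) - \bar p(t)\| = O(\epsilon)$ uniformly in $t$, as claimed.

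\emph{Main obstacle.} The technical crux is the verification of Tikhonov's regularity hypotheses: the boundedness of $g = -\nabla F$, which is affine --- hence a priori unbounded --- in $u$, forces the argument onto a forward-invariant compact region rather than the whole space; and the required Lipschitz/smoothness of the centroid map $c(p,t)$ together with $\frac{\partial c}{\partial p}$, $\frac{\partial c}{\partial t}$ is not explicitly among the stated assumptions and must be traced back to regularity of $\phi$ and non-degeneracy of the Voronoi tessellation. Matching the ``origin-as-equilibrium'' template of Theorem~\ref{th:Tikhonov} also hinges on the change of variables $e = p - c(p,t)$, whose well-posedness is exactly the standing invertibility assumption on $I - \frac{\partial c}{\partial p}$.
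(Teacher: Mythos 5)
Your proposal is correct and follows essentially the same route as the paper: identify \tvd{c}{} as the quasi-steady-state model of \tvd{sp}{\epsilon}, reduce the slow dynamics to $\dot e = -\kappa e$ via $e = p - c(p,t)$, show the boundary layer $-M^\top M\,y$ is globally exponentially stable, verify the Lipschitz/boundedness hypotheses on compacts, and invoke Theorem~\ref{th:Tikhonov}. Your treatment is somewhat more explicit than the paper's (e.g., the change of variables to match the origin-as-equilibrium template and the boundedness of the affine $g$ on a compact region), but the argument is the same.
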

\begin{proof}
  We rewrite \eqref{eq: TVD-SP} as
  \begin{equation*}
    \begin{aligned}
      \dot{p} & = u,
      \\
      \dot{u} & = - \frac{1}{\epsilon} \Big( I - \frac{\partial
        c}{\partial p} \Big)^\top \Big( \! \Big( I - \frac{\partial
          c}{\partial p} \Big) u - \Big( -\kappa (p - c) +
        \frac{\partial c}{\partial t} \Big) \! \Big).
    \end{aligned}
  \end{equation*}
  From the assumptions, this dynamics is locally Lipschitz, and hence
  unique solutions of \tvd{sp}{\epsilon}, with $\epsilon >0$, exist
  from every initial condition.

  Since the matrix $M = \big(I - \frac{\partial c}{\partial p} \big)$
  is invertible, $\nabla F(u) = 0$ has a unique solution, which can be
  expressed as
  $u = h(p,t) \triangleq -M^{-1}( k(p-c) + \frac{\partial c}{\partial
    t} )$. Then, it immediately follows that the slow model,
  cf.~\eqref{eq:reduced-model-with-root}, of~\tvd{sp}{\epsilon}
  corresponds to the \tvd{c}{} dynamics~\eqref{eq: TVD-C}. As $M$ is
  invertible, there is a well defined solution $\bar{p}(t)$ to this
  dynamics from every initial condition. Define
  $e(t) = \bar{p}(t) - c(t)$, then~\eqref{eq: TVD-C} is equivalent to
  $\dot{e} = -k e$, which has a globally exponentially stable
  equilibrium point. Thus,
  $\lim_{t \rightarrow \infty} \|\bar{p}(t) - c(t)\| = 0$
  exponentially fast.  On the other hand, the boundary layer system of
  \tvd{sp}{\epsilon} is
  \begin{align*}
    \frac{d p}{d\eta}
    & = 0,
    \\
    \frac{d u}{d\eta}
    &
      = %-{\color{magenta} \eta}
      -\Big( I - \frac{\partial c}{\partial p} \Big)^\top
      \Big( \!
      \Big( I - \frac{\partial c}{\partial p} \Big) u - \Big(
      -\kappa (p - c) 
      + \frac{\partial c}{\partial t} \Big) \! \Big) \nonumber
    \\
    & = %- \eta
      - \nabla F(u),
      % \end{aligned}
  \end{align*}
  with $d\eta = dt/\epsilon$.  For any fixed slow variable $p_0$, this
  dynamics has the unique solution of $\nabla F(u) = 0$ as a globally
  exponentially stable equilibrium. Finally, note that the functions
  $h$, as well as $f(p,u,t) = u$ and $g = -\nabla F(u)$, are locally
  Lipschitz, as the centroids of a Voronoi partition are locally
  Lipschitz, cf.~\cite{JC-SM-FB:03p}. Thus, these functions are
  globally Lipschitz and bounded over compact domains.  Collecting all
  the above arguments, we can apply Theorem~\ref{th:Tikhonov} to
  conclude the statement.
\end{proof}

The advantage of the proposed method \tvd{sp}{\epsilon} is that it is
2-hop distributed for any $\epsilon$, whereas \tvd{c}{} requires
centralized information and \tvd{d}{k} requires $k$-hop neighbor
information. The parameter $\epsilon$ has the intuitive interpretation
of the timescale of communication, so more accurately approximating
the flow \tvd{c}{} requires larger communication overhead.

\begin{example}
  We compare here the performance of Lloyd's algorithm (cf.
  eq.~\eqref{eq: lloyd}), the centralized algorithm \tvd{c}{} (cf.
  eq.~\eqref{eq: TVD-C}), the power series approximation \tvd{d}{k}
  (cf.~\cite{SL-YMD-ME:15}), and the singular perturbation algorithm
  \tvd{sp}{\epsilon} (cf. eq.~\eqref{eq: TVD-SP}).  We
  consider\footnote{Simulations are implemented on python.
    % The code is  available at
    % \url{https://github.com/bbao200905/tvd_coverage}. 
  }
  10 agents performing dynamic coverage control in $Q = [-10,10]^2$
  for the following time-varying densities
  \begin{align*}
    \phi_1(q,t)
    & = e^{-\left(  \left(q_x - 2
      \sin\left(\frac{t}{5}\right)\right)^2 + \left(q_y
      \right)^2      \right),}
    \\ 
    \phi_2(q,t)
    & = e^{-\left(  \left(q_x -
      \sin\left(\frac{t}{5}\right)\right)^2 + \left(q_y +
      \sin\left(\frac{2t}{5}\right) \right)^2
      \right),}
    \\
    \phi_3(q,t)
    & = e^{-\left(  \left(q_x - 2
      \cos\left(\frac{t}{5}\right)\right)^2 + \left(q_y -
      2 \sin\left(\frac{t}{5}\right)\right)^2  \right),}
  \end{align*}
  representing different moving targets. As a measure of performance,
  we use the integral $ \int_0^T H(p(t),t) dt$ of the coverage
  cost~\eqref{def: H} over the duration of the simulation. We set
  $\kappa =1$, $T=31.5$s, and $dt=0.1$s.

  \begin{table}[htb]
    \centering
    \caption{Comparison of total costs for 10 agents.}\label{tab: 10
      agents}
    \begin{tabular}{|c|c|c|c|}
      \hline
      Algorithm      & $\phi_1$ & $\phi_2$ & $\phi_3$ \\
      \hline
      Lloyd          & 52.11    & 50.57    & 62.82    \\
      \hline
      \tvd{d}{0}     & 49.10    & 48.55    & 55.41    \\
      \hline
      \tvd{d}{1}     & 44.39    & 44.23    & 47.90    \\
      \hline
      \tvd{d}{2}     & 42.68    & 42.70    & 44.75    \\
      \hline
      \tvd{d}{3}     & 41.95    & 41.97    & 43.23    \\
      \hline
      \tvd{sp}{0.1}  & 46.26    & 46.26    & 53.29    \\
      \hline
      \tvd{sp}{0.05} & 42.49    & 43.07    & 42.93    \\
      \hline
      \tvd{sp}{0.01} & 41.40    & 41.45    & 41.48    \\
      \hline
      \tvd{c}{}      & 41.84    & 42.47    & 41.46    \\
      \hline
    \end{tabular}
  \end{table}
  
  We simulate the Neumann series approximation algorithm \tvd{d}{k}
  for $k \in \{ 0,1,\dots,3\}$ and the proposed singular perturbation
  algorithm \tvd{sp}{\epsilon} with
  $\epsilon \in \{ 0.1, 0.05, 0.01\}$.  Table~\ref{tab: 10 agents}
  presents the results.  One can see that as $\epsilon$ gets smaller
  (i.e., more rounds of communication take place to decide the control
  action for agent motion), the performance of \tvd{sp}{\epsilon}
  improves.  \tvd{d}{k} and \tvd{sp}{\epsilon} both perform better
  than Lloyd's algorithm, and both \tvd{sp}{0.01} and \tvd{d}{3}
  (requiring 3-hop neighbor information) achieve costs comparable to
  the cost of \tvd{c}{}, with \tvd{sp}{0.01} performing slightly
  better. \oprocend
\end{example}

\section{1-Hop Distributed Discrete-Time Implementations with
  Delayed Information}\label{sec: num implementation}

In this section, we study discretizations of the dynamics
\tvd{sp}{\epsilon} that require communication only with 1-hop
neighbors at the cost of using partially delayed information.

\subsection{2-Hop Distributed Discretization of \tvd{sp}{\epsilon}}

For convenience, we use the notation $u^k = u(k t)$, for a
discretization step $\delta t$. A simple forward Euler integration in
$p$ gives
\begin{subequations}
\begin{equation}\label{eq:discrete-p}
  p^{k+1} = p^{k} + u^{k}  \delta t, 
\end{equation}  
for each $k \ge 0$.  The control $u^k$ is given by a discretization of
the boundary layer dynamics $du/d\eta = - \nabla F(u)$ of~\eqref{eq:
  TVD-SP}, where $ p \equiv p^{k}$ is held constant.
Since
$\eta$ is a much faster time scale than $t$, for each update of $t$,
we need to implement $N \approx 1/ \epsilon$ updates in $\eta$. We
assume that $\epsilon$ is chosen so that $N$ is an integer value, and
index the fast updates in $u$ as $u_\ell = u(\ell \eta)$. In this way,
$u_{N k } \equiv u^k$, for each $k$. The undelayed, discretized
gradient-descent dynamics of the boundary layer equation is simply
\begin{equation}\label{eq:discrete-u}
  u_{\ell+1} = u_{\ell} - \delta \eta \nabla F(u_{\ell}), \qquad \ell \ge 0.
\end{equation}
\end{subequations}
We next investigate the information exchange among agents required to
execute this algorithm. 

For convenience, we write $\nabla F(u)$ as
\begin{subequations}
  \label{eq:gradient-F}
  \begin{align}
    \nabla F(u) &
           =   A u + b,
  \end{align}
  where
  \begin{align}
    A & = \Big( I - \frac{\partial c}{\partial p} \Big)^\top \Big( I -
        \frac{\partial c}{\partial p} \Big) \in \real^{nd \times nd},
    \\
    b & = -\Big( I - \frac{\partial c}{\partial p}
        \Big)^\top\Big(-\kappa (p - c) + \frac{\partial c}{\partial
        t}\Big) \in \real^{nd} .
  \end{align}
\end{subequations}
The following result provides an explicit description of the entries
of $A$ and~$b$.

\begin{lemma}\label{lem:block-entries}
  The block entries of $A$ and $b$ in~\eqref{eq:gradient-F} are, for
  $i,j \in \until{n}$,
  \begin{align*}
    A_{ii}
    & = \Big(I-\frac{\partial c_i}{\partial
      p_i}\Big)^\top \!  \Big(I-\frac{\partial c_i}{\partial
      p_i}\Big) + \sum_{j\in N_i} \frac{\partial
      c_j}{\partial p_i}^\top \! \frac{\partial
      c_j}{\partial p_i}  \in \real^{d \times d},
    \\
    A_{ij}
    & = - \Big( \! \Big(
      I-\frac{\partial c_i}{\partial p_i}\Big)^\top \frac{\partial
      c_i}{\partial p_j} + \frac{\partial c_j}{\partial p_i}^\top
      \Big(I-\frac{\partial c_j}{\partial p_j} \Big) \Big) \,
      \mathbb{1}_{N_i}(j)
    \\
    & \qquad + 
      \sum_{k \in
      N_i \cap N_j} \frac{\partial c_k}{\partial p_i}^\top
      \frac{\partial c_k}{\partial p_j} \, \mathbb{1}_{N_i^2}(j) \in
      \real^{d \times d}, 
    \\    
    b_i
    & = -\Big(I-\frac{\partial c_i}{\partial p_i}\Big)^\top
      \Big(-\kappa(p_i-c_i)  +\frac{\partial c_i}{\partial
      t} \Big)
    \\
    & \qquad +
      \sum_{j\in N_i}\frac{\partial c_j}{\partial
      p_i}^\top \Big(-\kappa(p_j-c_j)+\frac{\partial c_j}{\partial
      t}\Big) \in \real^{d},
  \end{align*}
  where $\mathbb{1}_{S} (\cdot)$ denotes the indicator function for
  the set~$S$.
\end{lemma}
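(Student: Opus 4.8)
The plan is to combine two facts: the Delaunay sparsity pattern of the Jacobian $\frac{\partial c}{\partial p}$, and a block-wise expansion of the products that define $A$ and $b$ in~\eqref{eq:gradient-F}.

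First I would record the structural property that, since the Voronoi cell $V_i(p)$, and hence its mass $m_i$ and centroid $c_i$, depends only on $p_i$ and on the positions $\{p_j : j\in N_i\}$ of the Delaunay neighbors of $i$, the block $\frac{\partial c_i}{\partial p_j}\in\real^{d\times d}$ vanishes whenever $j\notin\{i\}\cup N_i$ (the explicit expressions for the surviving partials are as in~\cite{JC-SM-TK-FB:02j}). Writing $M = I-\frac{\partial c}{\partial p}$ in block form, this says $M_{ii} = I-\frac{\partial c_i}{\partial p_i}$, $M_{ij} = -\frac{\partial c_i}{\partial p_j}$ for $j\in N_i$, and $M_{ij}=0$ otherwise; in particular the only nonzero blocks in row $i$ of $M$ are those indexed by $\{i\}\cup N_i$.

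Next I would expand $A = M^\top M$ block-wise as $A_{ij} = \sum_{k} M_{ki}^\top M_{kj}$, noting that only those $k$ with $M_{ki}\neq 0$ and $M_{kj}\neq 0$ contribute, i.e. $k\in(\{i\}\cup N_i)\cap(\{j\}\cup N_j)$. For $i=j$ this leaves the $k=i$ term $\big(I-\frac{\partial c_i}{\partial p_i}\big)^\top\big(I-\frac{\partial c_i}{\partial p_i}\big)$ together with the terms $\frac{\partial c_k}{\partial p_i}^\top\frac{\partial c_k}{\partial p_i}$ for $k\in N_i$. For $i\neq j$, the index set is nonempty only through $k=i$ (which, by undirectedness of the Delaunay graph, requires $i\in N_j$, equivalently $j\in N_i$), through $k=j$ (requiring $j\in N_i$), or through $k\in N_i\cap N_j$ (forcing $j$ to lie within two hops of $i$); the first two cases give $-\big(I-\frac{\partial c_i}{\partial p_i}\big)^\top\frac{\partial c_i}{\partial p_j}$ and $-\frac{\partial c_j}{\partial p_i}^\top\big(I-\frac{\partial c_j}{\partial p_j}\big)$, and gathering all contributions with the indicators $\mathbb{1}_{N_i}(\cdot)$ and $\mathbb{1}_{N_i^2}(\cdot)$ yields the stated formula for $A_{ij}$. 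The formula for $b$ follows the same pattern: with $v_k = -\kappa(p_k-c_k)+\frac{\partial c_k}{\partial t}$ we have $b_i = -(M^\top v)_i = -\sum_{k} M_{ki}^\top v_k$, and only $k\in\{i\}\cup N_i$ survive, giving the $k=i$ term $-\big(I-\frac{\partial c_i}{\partial p_i}\big)^\top v_i$ and the neighbor terms $\frac{\partial c_k}{\partial p_i}^\top v_k$.

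The computation is essentially bookkeeping, so I do not expect a genuine obstacle; the one place that needs care is the index analysis for the off-diagonal blocks of $A$ — using symmetry of the Delaunay graph to see that the $k=i$ contribution appears precisely when $j\in N_i$, and checking that the common-neighbor sum $\sum_{k\in N_i\cap N_j}$ never picks up $k=i$ or $k=j$ (since $i\notin N_i$ and $j\notin N_j$), so that no term is double counted.
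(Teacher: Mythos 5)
Your proof is correct and is essentially the same computation as the paper's: both rest on the Delaunay sparsity pattern of $\frac{\partial c}{\partial p}$ and a block-wise expansion of $M^\top M$ and $-M^\top v$, with the only difference being that you enumerate the contributing indices directly in $A_{ij}=\sum_k M_{ki}^\top M_{kj}$ whereas the paper expands $\nabla_i F(u)$ and then interchanges the summation indices to expose the $N_i^2$ and $N_i\cap N_j$ structure. Your explicit check that the common-neighbor sum cannot pick up $k=i$ or $k=j$ (since $i\notin N_i$) is a welcome detail the paper leaves implicit.
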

\begin{proof} 
  The component corresponding to agent~$i$ is
  \begin{equation*}
    \begin{aligned}
      \nabla_i
      & F(u) = \Big( \! \Big(I-\frac{\partial c_i}{\partial
        p_i}\Big)^\top \!  \Big(I-\frac{\partial c_i}{\partial
        p_i}\Big) + \sum_{j\in N_i}\frac{\partial
        c_j}{\partial p_i}^\top \! \frac{\partial
        c_j}{\partial p_i} \! \Big) u_i
      \\
      & - \sum_{j \in N_i}
        \Big( \! \Big(I-\frac{\partial c_i}{\partial
        p_i}\Big)^\top \frac{\partial c_i}{\partial p_j} + \frac{\partial
        c_j}{\partial p_i}^\top \Big(I-\frac{\partial c_j}{\partial p_j}
        \Big) \! \Big) u_j
      \\
      &
        + \sum_{j\in N_i} \Big(\sum_{k \in N_j}
        \frac{\partial c_j}{\partial p_i}^\top \frac{\partial c_j}{\partial
        p_k} u_k \Big)+ b_i .
    \end{aligned}
  \end{equation*}
  By interchanging the indices $j$ and $k$ in the last term, and
  describing all agents such that $k\in N_i$ and $j \in N_k$ as
  $j \in N_i^2$ and $k \in N_i \cap N_j$ instead (here, $N_i^2$ denotes
  the 2-hop Delaunay neighbors of $i$), the equation can be rewritten
  as
  \begin{equation*}
    \begin{aligned}
      \nabla_i
      & F(u) = \Big( \! \Big(I-\frac{\partial c_i}{\partial
        p_i}\Big)^\top \! \Big(I-\frac{\partial c_i}{\partial
        p_i}\Big) + \sum_{j\in N_i} \frac{\partial
        c_j}{\partial p_i}^\top \frac{\partial
        c_j}{\partial p_i} \Big) u_i
      \\
      &
        - \sum_{j \in
        N_i} \Big( \! \Big(I-\frac{\partial c_i}{\partial
        p_i}\Big)^\top \frac{\partial c_i}{\partial p_j} + \frac{\partial
        c_j}{\partial p_i}^\top \Big(I-\frac{\partial c_j}{\partial p_j}
        \Big) \Big)  u_j
      \\
      &
        + \sum_{j \in N_i^2} \Big( \sum_{k \in
        N_i \cap N_j} \frac{\partial c_k}{\partial p_i}^\top
        \frac{\partial c_k}{\partial p_j} \Big) u_j + b_i.
    \end{aligned}
  \end{equation*}
  The expression for $A_{ii}$ and $A_{ij}$ then readily
  follows. Finally, the expression for $b_i$ follows from its
  definition in~\eqref{eq:gradient-F}.
\end{proof}

The result in Lemma~\ref{lem:block-entries} is useful to understand
the intricacies of the information exchange required to implement the
discretization of \tvd{sp}{\epsilon}. With the notation of this
result,  we rewrite~\eqref{eq:discrete-u} as
\begin{align}\label{eq:2-hop-fresh-info}
  u_{\ell+1} = u_\ell - \delta \eta (A u_{\ell} + b), \qquad \ell \ge
  1.
\end{align}
First, note that the matrix $A$ is symmetric. Second, the computation
of the blocks of the matrix $A$ requires 2-hop position neighbor
information.  Since during the $N$ steps of the discretized boundary
layer dynamics~\eqref{eq:discrete-u} that correspond to the $1$ step
of the slow dynamics~\eqref{eq:discrete-p}, the agents' position is
held constant, the transmission of \emph{agent position information}
only needs to occur once.

\begin{theorem}\label{th:convergence}
  Assume $\nabla F(u_*) = 0$ has a unique solution~$u_*$ and
  $A \succ 0$. Then, for $\delta \eta \in (0,1/(2\subscr{\lambda}{max}(A)))$,
  the dynamics~\eqref{eq:2-hop-fresh-info} satisfy $u_\ell \rightarrow u_*$
  as $\ell \rightarrow \infty$.
\end{theorem}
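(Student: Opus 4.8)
The plan is to recognize that $F$ in~\eqref{eq: perturbation objective} is a convex quadratic with Hessian $A$, so~\eqref{eq:2-hop-fresh-info} is nothing but constant-step gradient descent on $F$, and its analysis reduces to a linear recursion governed by the spectrum of $I-\delta\eta A$. First I would rewrite the iteration in affine form as $u_{\ell+1} = (I-\delta\eta A)\,u_\ell - \delta\eta\, b$. Since $A\succ 0$ it is invertible, so $\nabla F(u)=Au+b=0$ has the unique root $u_* = -A^{-1}b$ (this also shows the two hypotheses are consistent), and $u_*$ is a fixed point of the affine map: $u_* = (I-\delta\eta A)u_* - \delta\eta b$. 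Subtracting this identity from the iteration gives the homogeneous error recursion $e_{\ell+1} = (I-\delta\eta A)\,e_\ell$ for $e_\ell \triangleq u_\ell - u_*$, hence $e_\ell = (I-\delta\eta A)^\ell e_0$.

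The core step is then to show that $I-\delta\eta A$ is a strict contraction in the spectral norm. Because $A$ is symmetric positive definite it is orthogonally diagonalizable with eigenvalues in $[\lambda_{\min}(A),\lambda_{\max}(A)]$, $\lambda_{\min}(A)>0$; consequently $I-\delta\eta A$ is symmetric with eigenvalues $1-\delta\eta\lambda$ as $\lambda$ ranges over the spectrum of $A$. For $\delta\eta \in (0, 1/(2\lambda_{\max}(A)))$ one has $0 < \delta\eta\lambda \le \delta\eta\lambda_{\max}(A) < 1/2$ for every eigenvalue $\lambda$ of $A$, so $|1-\delta\eta\lambda| < 1$; since the matrix is symmetric, its operator norm equals its spectral radius, giving $\|I-\delta\eta A\| = \rho \triangleq 1 - \delta\eta\lambda_{\min}(A) < 1$. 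Therefore $\|e_\ell\| \le \rho^\ell \|e_0\| \to 0$, i.e., $u_\ell \to u_*$ from every initial condition, in fact geometrically with rate $\rho$.

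I do not expect a real obstacle here; the proof is a short linear-algebra argument. The only points requiring care are (i) confirming that $u_*$ from the first hypothesis coincides with the fixed point $-A^{-1}b$ produced by the second, and (ii) verifying that the stated step-size window lands strictly inside the convergence region. I would also add a brief remark that the bound $\delta\eta < 1/(2\lambda_{\max}(A))$ is only sufficient (the sharp threshold for a quadratic being $\delta\eta < 2/\lambda_{\max}(A)$), but that sufficiency is all that Theorem~\ref{th:convergence} asserts, and that this discrete result is the discrete-time counterpart of the global exponential stability of the boundary-layer dynamics used in the proof of Theorem~\ref{th: main result}.
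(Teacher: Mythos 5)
Your proof is correct. The paper in fact states Theorem~\ref{th:convergence} without proof, so there is no argument to compare against line by line; your route --- writing the error recursion $e_{\ell+1}=(I-\delta\eta A)e_\ell$ and noting that the symmetric matrix $I-\delta\eta A$ has all eigenvalues $1-\delta\eta\lambda\in(1/2,1)$ under the stated step-size window, hence spectral norm $1-\delta\eta\lambda_{\min}(A)<1$ --- is the standard one and is precisely the undelayed counterpart of the eigenvalue computation the paper does carry out for Theorem~\ref{th: delayed discretized}, where the augmented system matrix has eigenvalues $\tfrac{1\pm\sqrt{1-4\delta\eta\mu}}{2}$. Your side observations, that $u_*=-A^{-1}b$ reconciles the two hypotheses and that the stated bound $1/(2\lambda_{\max}(A))$ is merely sufficient (the sharp threshold being $2/\lambda_{\max}(A)$), are also accurate.
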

\smallskip

Note that, since the update~\eqref{eq:2-hop-fresh-info} requires 2-hop
\emph{control neighbor information} at every step, each agent cannot
execute it if only communication with 1-hop neighbors is possible at
every step. Instead, to address this, agents could explore the use
delayed information about its 2-hop neighbors: essentially, this
corresponds to the information that its 1-hop neighbor received the
previous step, which is relayed at the current step. This is what we
discuss in the following.

\subsection{1-Hop Distributed Implementation with Delayed Updates}

The discussion above justifies why in the following we investigate
partially-delayed gradients versions of~\eqref{eq:discrete-u}.  Set
initially $u_{i,0} = u_{i,1} =0$, for all $i$. Assume that, at each
round $\ell \ge 2$, agent $i$ transmits information about its current
$u_{i,\ell}$ as well as relays the values $u_{j,\ell-1}$ received from
its 1-hop neighbors at the previous round $\ell-1$. Then, each agent
receives information from its 2-hop neighbors with 1 time-step
delay. With this information exchange, one can write the
\emph{all-neighbors-delayed} update as,
\begin{equation}\label{eq:homogeneous}
  u_{\ell+1} = u_\ell - \delta \eta (A u_{\ell -1} + b), \qquad \ell \ge
  1.
\end{equation}
In this case, all agents keep in memory their previous values
$u_{i,\ell-1}$ and update the gradient with a delay of 1 time step
across all agents (even though they have more up to date information
about their 1-hop neighbors and themselves). The next result
characterizes the convergence properties of~\eqref{eq:homogeneous}.

\begin{theorem}\label{th: delayed discretized}
  Assume $\nabla F(u_*) = 0$ has a unique solution~$u_*$ and
  $A \succ 0$. Then, for
  $\delta \eta \in (0,1/(4\subscr{\lambda}{max}(A)))$, the
  dynamics~\eqref{eq:homogeneous} satisfy $u_\ell \rightarrow u_*$ as
  $\ell \rightarrow \infty$.
\end{theorem}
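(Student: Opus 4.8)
The plan is to analyze the linear recurrence \eqref{eq:homogeneous} via a change of variables centered at the fixed point, followed by a $z$-transform / generating-function argument as in \cite{YA-OS-NS:20}. Let $e_\ell = u_\ell - u_*$. Since $A u_* + b = 0$, subtracting the fixed-point relation from \eqref{eq:homogeneous} gives the homogeneous recursion $e_{\ell+1} = e_\ell - \delta\eta\, A\, e_{\ell-1}$. Because $A \succ 0$ is symmetric, I would diagonalize: write $A = \sum_r \lambda_r v_r v_r^\top$ with $\lambda_r \in (0, \subscr{\lambda}{max}(A)]$, and project $e_\ell$ onto each eigenvector to decouple the system into scalar recurrences $x_{\ell+1} = x_\ell - \delta\eta\,\lambda_r\, x_{\ell-1}$, one per mode. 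It then suffices to show each scalar recurrence converges to $0$ for every $\lambda \in (0,\subscr{\lambda}{max}(A)]$ whenever $\delta\eta \in (0, 1/(4\subscr{\lambda}{max}(A)))$.

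For a fixed mode, set $a = \delta\eta\,\lambda \in (0, 1/4)$ and consider $x_{\ell+1} = x_\ell - a\, x_{\ell-1}$. Its characteristic polynomial is $\chi(z) = z^2 - z + a$, with roots $z_\pm = \tfrac{1}{2}\bigl(1 \pm \sqrt{1-4a}\bigr)$. The threshold $a < 1/4$ is exactly the condition $1 - 4a > 0$, so both roots are real, distinct, and since $a > 0$ we have $0 < z_- < z_+ < 1$. Hence the spectral radius of the companion matrix is $z_+ < 1$, and $x_\ell \to 0$ geometrically. (Equivalently, one can invoke the Jury/Schur stability criterion for a degree-two polynomial: $\chi(1) = a > 0$, $\chi(-1) = 2 + a > 0$, and $|a| < 1$ — the constant term is the product of the roots — all hold, certifying all roots strictly inside the unit disk.) Reassembling the finitely many modes, $\|e_\ell\| \to 0$, i.e. $u_\ell \to u_*$. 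I would also note the geometric rate is uniform since $z_+$ is maximized over $\lambda \in (0,\subscr{\lambda}{max}(A)]$ at the largest eigenvalue, giving an explicit contraction factor.

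The only delicate point is the edge behavior in $a$: the hypothesis gives the open interval $\delta\eta \in (0, 1/(4\subscr{\lambda}{max}(A)))$, so $a = \delta\eta\,\lambda \le \delta\eta\,\subscr{\lambda}{max}(A) < 1/4$ strictly, which keeps $1 - 4a$ bounded away from $0$ and the roots away from coalescing at $1/2$ — so the $O(\ell\, z_+^{\ell})$ decay rate that would arise at a double root is not needed, though it would still converge. The main obstacle, if one wants a cleaner self-contained argument rather than quoting companion-matrix spectral radius, is handling the coupling: one must be careful that the eigen-decomposition of the symmetric $A$ genuinely decouples \emph{both} $e_\ell$ and $e_{\ell-1}$ simultaneously, which it does because the same orthonormal eigenbasis works at every index. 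An alternative that avoids diagonalization entirely — and is perhaps the route the authors take, given the mention of generating functions — is to define the formal power series $U(z) = \sum_{\ell\ge 0} e_\ell z^\ell$, derive $(1 - z + a z^2) U(z) = (\text{initial-condition polynomial})$, and read off convergence from analyticity of $U$ in a disk of radius $> 1$, i.e. from the roots of $1 - z + a z^2$ lying outside the closed unit disk; this reciprocal polynomial has roots $1/z_\pm > 1$ under $a < 1/4$, closing the argument.
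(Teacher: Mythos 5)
Your proof is correct and follows essentially the same route as the paper: the paper rewrites \eqref{eq:homogeneous} as a first-order affine iteration with the block companion matrix $\begin{bmatrix} I & -\delta\eta A \\ I & 0\end{bmatrix}$ and notes its eigenvalues are $\tfrac{1\pm\sqrt{1-4\delta\eta\mu}}{2}$ for $\mu$ an eigenvalue of $A$, which is exactly the root computation you perform after diagonalizing $A$ mode by mode. Your version merely makes the decoupling step and the bound $0<z_-<z_+<1$ explicit, which the paper leaves implicit.
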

\begin{proof}
  The result follows by rewriting~\eqref{eq:homogeneous} as
  \begin{align*}
    \begin{bmatrix}
      u_{\ell+1}
      \\
      u_{\ell}
    \end{bmatrix}
    =
    \begin{bmatrix}
      I  & - \delta \eta A
      \\
      I & 0
    \end{bmatrix}
    \begin{bmatrix}
      u_{\ell}
      \\
      u_{\ell-1}
    \end{bmatrix}
    +
    \begin{bmatrix}
      - \delta \eta b
      \\
      0
    \end{bmatrix}
  \end{align*}
  and noting that the eigenvalues of the system matrix are of the form
  $\frac{1\pm\sqrt{1-4 \delta \eta \mu }}{2}$, with $\mu$ an eigenvalue of~$A$.
\end{proof}

According to~\cite[Lemma~1]{YMD-SL-ME:15}, if the locational cost is
locally strongly convex at the CVT, then there exists a constant such
that, if the distance between the agents and their CVT is below it,
then eig$(\frac{\partial c}{\partial p}) < 1$ (recall from the proof
of Theorem~\ref{th: main result} that, if
$ \Big(I - \frac{\partial c}{\partial p} \Big)$ is invertible, then
$\nabla F(u) = 0$ has a unique solution).  In our numerical
simulations, eigenvalues of $A$ are in the range $(0, 1]$
when agents are near their time-varying centroids.

The update~\eqref{eq:homogeneous} uses only delayed information to
evaluate the term $A u$ in $\nabla F (u)$. However, this can be
refined to employ the up-to-date information that each agent actually
has about itself and its 1-hop neighbors. To formalize these, we next
describe equivalent forms of writing $\nabla F(u)$.

\begin{lemma}\label{lem:gradient-decomp}
  Define the matrices\footnote{The superindex $f$ stands for `fresh'
    and $d$ stands for `delayed'.} $\Ac^f$ and $\Ac^d$ block-wise as
  follows: for each $i \in \until{n}$, let \small
  \begin{alignat*}{2}
    \Ac^f_{ii}
    &= A_{ii} ,
    \\
    \Ac^f_{ij}
    & = - \Big( \!  \Big(I-\frac{\partial c_i}{\partial
      p_i}\Big)^\top \! \frac{\partial c_i}{\partial p_j} + \frac{\partial
      c_j}{\partial p_i}^\top \! \Big(I-\frac{\partial c_j}{\partial p_j}
      \Big)\!\Big), && \quad \text{if } j \in N_i\setminus N_i^2,
    \\
    \Ac^f_{ij}
    & = - \Big( \!  \Big(I-\frac{\partial c_i}{\partial
      p_i}\Big)^\top \! \frac{\partial c_i}{\partial p_j} + \frac{\partial
      c_j}{\partial p_i}^\top \! \Big(I-\frac{\partial c_j}{\partial p_j}
      \Big)\!\Big)
    \\
    & \qquad + \sum_{k \in N_i \cap N_j}\frac{\partial c_k}{\partial
      p_i}^\top \! \frac{\partial c_k}{\partial p_j}, && \quad \text{if } j
                                                 \in N_i \cap N_i^2,
    \\
    \Ac^f_{ij}
    & = 0, && \quad \text{otherwise},
    \\
    \Ac^d_{ij}
    & = \sum_{k \in N_i \cap N_j} \frac{\partial
      c_k}{\partial p_i}^\top \! \frac{\partial c_k}{\partial p_j}, && \quad
                                                               \text{if } j \in N_i^2 \setminus N_i,
    \\
    \Ac^d_{ij} & = 0 , && \quad \text{otherwise}.
  \end{alignat*}
  \normalsize
  Then, $\nabla F(u)$ can be expressed as
  $ \nabla F(u) = \Ac^f u + \Ac^du + b$.
  
  Also, define the matrices $\Af^f$ and $\Af^d$ block-wise as
  follows: for each $i \in \until{n}$, let
  \small
  \begin{alignat*}{2}
    \Af^f_{ii}
    & = A_{ii} ,
    \\
    \Af^f_{ij}
    & = - \Big( \! \Big(I-\frac{\partial c_i}{\partial
      p_i}\Big)^\top \! \frac{\partial c_i}{\partial p_j} + \frac{\partial
      c_j}{\partial p_i}^\top \! \Big(I-\frac{\partial c_j}{\partial p_j}
      \Big)\!\Big), && \quad \text{if } j \in N_i,
    \\
    \Af^f_{ij} & = 0, && \quad \text{otherwise},
    \\
    \Af^d_{ij} & = \sum_{k \in
                 N_i \cap N_j} \frac{\partial c_k}{\partial p_i}^\top \!
                 \frac{\partial c_k}{\partial p_j},
               && \quad \text{if } j \in N_i^2 ,
    \\
    \Af^d_{ij} & = 0, &&  \quad \text{otherwise}.
  \end{alignat*}
  \normalsize
  Then, $\nabla F(u)$ can be expressed as
  $ \nabla F(u) = \Af^f u + \Af^d u + b$.
\end{lemma}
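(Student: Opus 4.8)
The plan is to reduce both claimed identities to a purely combinatorial check against the block entries already computed in Lemma~\ref{lem:block-entries}. Writing $M = I - \frac{\partial c}{\partial p}$, so that $A = M^\top M$ and $b = -M^\top(-\kappa(p-c)+\frac{\partial c}{\partial t})$ as in~\eqref{eq:gradient-F}, and noting that the additive term $b$ is identical in $\nabla F(u)=Au+b$ and in both proposed decompositions, it suffices to establish the two block-matrix identities $\Ac^f+\Ac^d = A$ and $\Af^f+\Af^d = A$. I would carry this out by fixing $i\in\until{n}$, treating the diagonal block separately (where $\Ac^f_{ii}=\Af^f_{ii}=A_{ii}$ by definition and $\Ac^d_{ii}=\Af^d_{ii}=0$), and partitioning the remaining column indices into the four disjoint sets $N_i\setminus N_i^2$, $N_i\cap N_i^2$, $N_i^2\setminus N_i$, and the complement of $N_i\cup N_i^2$.

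On each of these four pieces I would match $(\Ac^f+\Ac^d)_{ij}$ (resp.~$(\Af^f+\Af^d)_{ij}$) against the two-indicator formula for $A_{ij}$ from Lemma~\ref{lem:block-entries}, namely the ``1-hop'' term $-\big((I-\tfrac{\partial c_i}{\partial p_i})^\top\tfrac{\partial c_i}{\partial p_j}+\tfrac{\partial c_j}{\partial p_i}^\top(I-\tfrac{\partial c_j}{\partial p_j})\big)$ weighted by $\mathbb{1}_{N_i}(j)$, plus the ``common-neighbor'' sum $\sum_{k\in N_i\cap N_j}\tfrac{\partial c_k}{\partial p_i}^\top\tfrac{\partial c_k}{\partial p_j}$ weighted by $\mathbb{1}_{N_i^2}(j)$. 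For the $\Ac$-decomposition: on $N_i\setminus N_i^2$ only the 1-hop term appears since $\Ac^d_{ij}=0$; on $N_i\cap N_i^2$ the block $\Ac^f_{ij}$ already carries both the 1-hop term and the common-neighbor sum while $\Ac^d_{ij}=0$; on $N_i^2\setminus N_i$ we get $\Ac^f_{ij}=0$ and $\Ac^d_{ij}$ equal to the common-neighbor sum; off $N_i\cup N_i^2$ all blocks vanish. For the $\Af$-decomposition the analysis is identical except that the common-neighbor sum on the overlap $N_i\cap N_i^2$ is booked into $\Af^d_{ij}$ rather than $\Af^f_{ij}$, because $\Af^f$ carries only the 1-hop term for every $j\in N_i$ and $\Af^d$ carries the common-neighbor sum for every $j\in N_i^2$; the four-piece comparison again reproduces $A_{ij}$ term by term.

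Conceptually, both identities are nothing more than two ways of regrouping the single sum $A_{ij}=\sum_k M_{ki}^\top M_{kj}$ into the contributions from $k\in\{i,j\}$ (the 1-hop, ``fresh'' part) and from common neighbors $k\in N_i\cap N_j$, which automatically excludes $i$ and $j$ (the 2-hop part), with $(\Ac^f,\Ac^d)$ and $(\Af^f,\Af^d)$ differing only in whether a 2-hop contribution whose outer index $j$ is also a 1-hop neighbor of $i$ is classified as fresh or delayed. I do not expect a genuine obstacle here: the work is exactly the re-indexing already performed once in the proof of Lemma~\ref{lem:block-entries}, and the only point needing care is to keep the diagonal block out of the off-diagonal case analysis, equivalently to observe that the self-term $\sum_{k\in N_i}\tfrac{\partial c_k}{\partial p_i}^\top\tfrac{\partial c_k}{\partial p_i}$ is already absorbed into $A_{ii}=\Ac^f_{ii}=\Af^f_{ii}$.
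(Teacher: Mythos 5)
Your proposal is correct and matches the paper's (implicit) argument: the paper states this lemma without proof precisely because, as you show, both decompositions are just blockwise regroupings of the entries of $A$ from Lemma~\ref{lem:block-entries} over the partition $\{i\}$, $N_i\setminus N_i^2$, $N_i\cap N_i^2$, $N_i^2\setminus N_i$, and the remainder. Your added care about the diagonal block (so the self-term is not double-counted in $\Af^d$) is the right point to flag, and nothing further is needed.
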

\smallskip

Note that the matrices $\Ac^f$, $\Ac^d$, $\Af^f$, and $\Af^d$ are all
symmetric. Motivated by the decompositions in
Lemma~\ref{lem:gradient-decomp}, we consider the generic update
\begin{equation}
  \label{eq:generic}
  u_{\ell+1} = u_\ell - \delta \eta (\overline{A}^f u_\ell + \overline{A}^d u_{\ell-1} +
  b), \quad \ell \ge 2,
\end{equation}
with $u_{0} = u_1 = 0$.
Note that $u_*$ is still a fixed point of~\eqref{eq:generic} as long
as $\overline{A}^f + \overline{A}^d = A$.

According to the first decomposition of the gradient $\nabla F$ in
Lemma~\ref{lem:gradient-decomp}, we choose $\overline{A}^f = \Ac^f$
and $\overline{A}^d = \Ac^d$ in~\eqref{eq:generic} to obtain the
\emph{$2\setminus \! 1$-neighbors-delayed} update
\begin{equation}\label{eq:discrete-u2}
  u_{\ell+1} = u_\ell - \delta \eta (\Ac^f u_\ell + \Ac^d u_{\ell-1} +
  b), \quad \ell \ge 2,
\end{equation}
with $u_{0} = u_1 = 0$.  According to this update, delayed information
is only used for those 2-hop neighbors that are not 1-hop neighbors
too.  According to the second decomposition of the gradient $\nabla F$
in Lemma~\ref{lem:gradient-decomp}, we choose $\overline{A}^f = \Af^f$
and $\overline{A}^d = \Af^d$ in~\eqref{eq:generic} to obtain the
\emph{$2$-neighbors-delayed} update
\begin{equation}\label{eq:discrete-u3}
  u_{\ell+1} = u_\ell - \delta \eta (\Af^f u_\ell + \Af^d u_{\ell-1} +
  b), \quad \ell \ge 2,
\end{equation}
with $u_0 = u_1 = 0$.  This update does not distinguish among 2-hop
neighbors that are/are not also 1-hop neighbors, and delays all the
information of the 2-hop neighbors.
Observe that both dynamics use more up-to-date neighbor information
than~\eqref{eq:homogeneous}, with dynamics~\eqref{eq:discrete-u2}
being the one that takes full advantage of it.  The following result
characterizes the convergence properties of updates of the
form~\eqref{eq:generic} (and hence, of~\eqref{eq:discrete-u2}
and~\eqref{eq:discrete-u3}) employing the theory of generating
functions, cf.~\cite{HSW:94,PF-RS:09}.

\begin{theorem}\label{th: hybrid discretized}
  Assume $(\overline{A}^f + \overline{A}^d) u + b=0$ has a unique
  solution~$u_*$, $\overline{A}^{f,\top} = \overline{A}^f \succ 0$,
  and $\overline{A}^{d,\top}=\overline{A}^d$.  Then, if
  $\subscr{\lambda}{min}(\overline{A}^f) >
  \max\{\subscr{\lambda}{max}(\overline{A}^d),|\subscr{\lambda}{min}(\overline{A}^d)|\}$,
  there is a sufficiently small $\delta \eta$ such that the
  dynamics~\eqref{eq:generic} satisfy $u_\ell \rightarrow u_*$, as
  $\ell \rightarrow \infty$.
  \end{theorem}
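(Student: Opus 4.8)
The plan is to move to error coordinates $e_\ell := u_\ell - u_*$ and recast convergence as an analyticity statement for a matrix-valued generating function. Since $u_*$ is the fixed point, $(\overline{A}^f+\overline{A}^d)u_*+b=0$; subtracting this from~\eqref{eq:generic} gives the homogeneous second-order recursion
\[
  e_{\ell+1} = (I-\delta\eta\,\overline{A}^f)\,e_\ell - \delta\eta\,\overline{A}^d\,e_{\ell-1}, \qquad \ell\ge 2,
\]
so the claim reduces to $e_\ell\to 0$ for arbitrary initial data. Because the $e_\ell$ obey a fixed linear recursion, $\|e_\ell\|$ grows at most geometrically, so $E(z):=\sum_{k\ge 0}e_{k+2}z^k$ defines an $\real^{nd}$-valued analytic function near the origin. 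Multiplying the recursion by $z^{k+1}$ (with $k=\ell-2$), summing over $k\ge 0$, and absorbing the finitely many boundary terms into a vector polynomial $q(z)$ of degree at most one, one obtains the functional equation
\[
  P(z)\,E(z) = q(z), \qquad P(z) := I - \Big(z(I-\delta\eta\,\overline{A}^f) - \delta\eta\, z^2\,\overline{A}^d\Big).
\]
Hence $E$ agrees with $P(z)^{-1}q(z)$ near $0$, and it suffices to show that $P(z)$ is invertible on a closed disk $\{|z|\le R\}$ with $R>1$: by the identity theorem $E$ then extends analytically there, and the Cauchy estimates force $\|e_{k+2}\|\le C\,R^{-k}$, i.e.\ geometric decay of $e_\ell$, which is the assertion $u_\ell\to u_*$.

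For the invertibility of $P$ I would use a Neumann-series bound in the spectral norm. Because $\overline{A}^f=\overline{A}^{f,\top}\succ 0$, for $\delta\eta\le 1/\subscr{\lambda}{max}(\overline{A}^f)$ the matrix $I-\delta\eta\,\overline{A}^f$ is symmetric and positive semidefinite, so $\|I-\delta\eta\,\overline{A}^f\|=1-\delta\eta\,\subscr{\lambda}{min}(\overline{A}^f)$; and since $\overline{A}^d=\overline{A}^{d,\top}$, its spectral norm is $\max\{\subscr{\lambda}{max}(\overline{A}^d),|\subscr{\lambda}{min}(\overline{A}^d)|\}$. Consequently, for $|z|\le r$,
\[
  \big\|z(I-\delta\eta\,\overline{A}^f) - \delta\eta\, z^2\,\overline{A}^d\big\| \le r\big(1-\delta\eta\,\subscr{\lambda}{min}(\overline{A}^f)\big) + r^2\,\delta\eta\,\max\{\subscr{\lambda}{max}(\overline{A}^d),|\subscr{\lambda}{min}(\overline{A}^d)|\},
\]
and at $r=1$ the right-hand side equals $1-\delta\eta\big(\subscr{\lambda}{min}(\overline{A}^f)-\max\{\subscr{\lambda}{max}(\overline{A}^d),|\subscr{\lambda}{min}(\overline{A}^d)|\}\big)<1$ by the standing hypothesis. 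By continuity of the bound in $r$ there is $R>1$ for which it remains strictly below $1$ on $|z|\le R$; on that disk $P(z)=I-(\cdots)$ is invertible with $\|P(z)^{-1}\|$ uniformly bounded, so $E=P^{-1}q$ extends analytically to $\{|z|\le R\}$ and the Cauchy estimate of the previous paragraph applies. This is exactly where "sufficiently small $\delta\eta$" is used: small enough that $I-\delta\eta\,\overline{A}^f\succeq 0$ so that its norm equals $1-\delta\eta\,\subscr{\lambda}{min}(\overline{A}^f)$; once that holds, the strict inequality at $r=1$ is automatic for every such $\delta\eta$.

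I expect the only delicate parts to be clerical rather than conceptual. One is getting the generating-function identity exactly right: the one-step delay produces the quadratic term $\delta\eta\, z^2\,\overline{A}^d$, and the initialization $u_0=u_1=0$ (and whatever first iterate the recursion generates) enters only through the polynomial $q$, whose precise form is irrelevant to the radius of convergence of $P^{-1}q$. The other is tracking the two smallness constraints on $\delta\eta$ and checking they are compatible, which they are. All the real content is in the hypothesis $\subscr{\lambda}{min}(\overline{A}^f) > \max\{\subscr{\lambda}{max}(\overline{A}^d),|\subscr{\lambda}{min}(\overline{A}^d)|\}$, i.e.\ the "fresh" block strictly dominates the "delayed" block in operator norm; this is what prevents the delayed term from overwhelming the contraction $I-\delta\eta\,\overline{A}^f$. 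One could instead bound the spectral radius of the companion matrix of the recursion directly, as in the proof of Theorem~\ref{th: delayed discretized}, but since $\overline{A}^f$ and $\overline{A}^d$ need not commute a joint eigen-analysis is awkward, whereas the generating-function route — following~\cite{HSW:94,PF-RS:09,YA-OS-NS:20} — is both cleaner and the one the statement advertises.
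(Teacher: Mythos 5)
Your argument is correct, and it takes a genuinely different route from the paper's, even though both start identically: pass to the error recursion $e_{\ell+1}=Be_\ell-\Lambda e_{\ell-1}$ with $B=I-\delta\eta\,\overline{A}^f$ and $\Lambda=\delta\eta\,\overline{A}^d$, and encode it in the generating-function identity $(I-Bz+\Lambda z^2)f(z)=q(z)$ with $q$ a vector polynomial of degree one. From there the paper factors $\pi(z)=I-Bz+\Lambda z^2$ through matrix roots $M_\pm$ (invoking Lemma~\ref{le:roots-matrix} to ensure $B^2-2\Lambda\succ 0$), writes $\pi^{-1}$ as a partial-fraction expansion in $M_\pm^{-1}$, and bounds the eigenvalues of the roots via Lagrange's bound along unit eigenvectors, arriving at $\|e_\ell\|=O(\gamma^\ell)$ with $\gamma=1-\delta\eta\big(\subscr{\lambda}{min}(\overline{A}^f)-\max\{\subscr{\lambda}{max}(\overline{A}^d),|\subscr{\lambda}{min}(\overline{A}^d)|\}\big)$. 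You never factor $\pi$: you bound the resolvent directly, $\|zB-z^2\Lambda\|\le |z|\,\|B\|+|z|^2\|\Lambda\|$, note that $\|B\|=1-\delta\eta\,\subscr{\lambda}{min}(\overline{A}^f)$ once $\delta\eta\le 1/\subscr{\lambda}{max}(\overline{A}^f)$ and that $\|\overline{A}^d\|$ equals the max in the hypothesis by symmetry, conclude via a Neumann series that $\pi(z)^{-1}$ exists and is uniformly bounded on a closed disk of radius $R>1$, and then read off geometric decay of the coefficients from Cauchy's estimates. Your route buys robustness: the paper's quadratic-formula expression for the matrix roots and the symmetry of $M_\pm^{-1}$ are delicate when $B$ and $\Lambda$ do not commute, whereas the resolvent bound needs no factorization at all and delivers essentially the same rate, since $1/R$ can be taken arbitrarily close to $\|B\|+\|\Lambda\|=\gamma$. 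What the paper's route buys is an explicit closed-form expression for $e_\ell$ in terms of the roots, which your argument neither produces nor needs. The two smallness requirements you track ($I-\delta\eta\,\overline{A}^f\succeq 0$, plus positivity of $\delta\eta$) are indeed compatible and are exactly where ``$\delta\eta$ sufficiently small'' enters, so no gap remains.
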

\begin{proof}
  The proof proceeds by showing that
  $e_\ell = u_\ell - u^* \rightarrow 0$, as $\ell \rightarrow
  \infty$. The error dynamics is given by
  $e_{\ell+1} = e_\ell - \delta \eta (\overline{A}^f e_\ell+ \overline{A}^d
  e_{\ell-1})$, for $\ell \in \mathbb{N}$, starting at some
  $e_1, e_0 \in \real^{nd}$.
    Denote by $B = I-\delta \eta \overline{A}^f$, and
  $\Lambda = \delta \eta \overline{A}^d$, and define 
  the formal power series expansion
  \begin{align*}
    f(z) & = \sum_{\ell = 0}^{+\infty}  e_\ell z^\ell \in \real[\![z]\!]^{nd}, 
  \end{align*}
  where $z $ is the formal series expansion variable, and
  $\real[\![z]\!]$ is the ring of formal power series expansions with
  coefficients in $\real$.\footnote{The above can be understood as a
    formal series expansion vector the entries of which are given by
    formal series expansions in real coefficients; that is, if
    $e = (e_i) \in \real^{nd}$, then
    $\sum_{\ell = 0}^{+\infty} e_\ell z^\ell = (\sum_{l = 0}^{+\infty}
    e_{i,\ell} z^\ell )$. Induced operations of this formal series
    expansion vector with a matrix formal power series expansion allow
    us to carry out the operations that follow.} In the following, we
  will aim to obtain bounds of $e_\ell$ so that
  $\|e_\ell\|\rightarrow 0$ as $\ell \rightarrow \infty$.  Note that
  it holds that
    \begin{align*}
      f(z) & = e_0 +  e_1 z + \sum_{\ell=2}^\infty (B e_{\ell-1} - \Lambda
      e_{\ell-2}) z^\ell  \\ & = e_0 +  e_1 z +
      B z \sum_{\ell=1}^\infty
      e_{\ell}z^{\ell}   - \Lambda \sum_{\ell=2}^\infty e_{\ell-2} z^\ell\\ & = e_0 +
      e_1 z +  [B z f(z)  - B e_0 z] -\Lambda z^2 f(z).
    \end{align*}
    From here, we obtain:
    \begin{align*}
      (I - B z + \Lambda  z^2) f(z) & = e_0 + (e_1 - B e_0)z.
    \end{align*}
    Define $\pi(z) \triangleq I - B z + \Lambda z^2$, which is an
    invertible formal series expansion in the ring
    $\real^{nd\times nd}[\![z]\!]$ with inverse $\pi^{-1}(z)$ (or
    $\frac{1}{\pi(z)}$ in the sequel.\footnote{The formal series
      expansion $\pi(z)$ is invertible in the ring
      $\real^{nd \times nd} [\![z]\!]$ since $I$ is invertible. The
      coefficients of such an inverse $\mu(z)$ can be found from the
      identity $\pi(z)\mu(z) = 1$, and by solving recursively for
      the coefficients of $\mu(z)$.}) Therefore,
    \begin{align*}
      f(z) & = \pi^{-1}(z)(e_0 + (e_1 - B e_0)z).
    \end{align*}
    
    Denote the coefficients of a formal series
    $S(z) = \sum_{\ell = 0}^\infty a_\ell z^\ell$ as
    $[z^\ell] S(z) \triangleq a_\ell$.  
    One can show that
  \begin{align*}
    [z^\ell] f(z) \triangleq e_\ell = [z^\ell]
    (\pi^{-1}(z) (e_0 + (e_1 - B e_0)z)), \quad \ell \ge 2,
  \end{align*}
  Thus, the coefficient of $z^\ell$ of $f(z)$, $[z^\ell] f(z)$, is
  given by:
  \begin{align*}
    [z^\ell] f(z) & = e_\ell = [z^\ell] (\pi^{-1}(z) (e_0 + (e_1 - B e_0)z)) \\
    & = [z^\ell] \pi^{-1}(z) e_0 + [z^{\ell -1}]  \pi^{-1}(z) (e_1 - B e_0). 
  \end{align*}

  To calculate this expression, we first find
  $[z^\ell](\pi^{-1}(z))$. Under the assumptions, there is a
  sufficiently small $\delta \eta$ such that
  $B^2 - 2\Lambda = (B^2 - 2\Lambda)^\top >0$; cf.
  Lemma~\ref{le:roots-matrix} in the
  Appendix.
  Therefore, the matrix roots of $\pi(z) = I - B z + \Lambda z^2 =0$,
  must satisfy
      \begin{align*}
        \Lambda M_\pm & = \frac{B \pm \sqrt{B^2 - 2\Lambda}}{2}. 
    \end{align*}
    Note that a root $M_\pm$ (also denoted individually as $M_+$,
    $M_-$) must be invertible since $I - BM_\pm + \Lambda M_\pm^2 = 0$
    implies $I = (B - \Lambda M_\pm)M_\pm$, therefore
    $\det(M_\pm) \neq 0$. It also holds that
    $\pi'(M_\pm) = -B +2 \Lambda M_{\pm} = \pm \sqrt{B^2 - 2\Lambda}$
    is invertible, for a sufficiently small $\delta \eta$. Observe that the
    inverse of the root matrices, $M_\pm^{-1}$, must also be roots of
    the matrix polynomial $p(z) = \pi(z^{-1})z = z^2 - Bz +\Lambda$.
        Therefore, a partial fraction
    expansion of $\pi^{-1}(z)$ can be obtained in terms of these
    invertible matrices as follows:
    {\small 
      \begin{align*}
        &\frac{1}{\pi(z)} = \frac{\pi'(M_+)^{-1}}{(z - M_+)} +
        \frac{\pi'(M_-)^{-1}}{(z - M_-)} \\
        &= -\frac{1}{\pi'(M_+)M_+ (1 - M_+^{-1} z)} - 
        \frac{1}{\pi'(M_-)M_- (1 - M_-^{-1} z)} \\
        & = 
        -\pi'(M_+)^{-1}M_+^{-1} \sum_{\ell=0}^\infty
        M_+^{-\ell} z^\ell  -\pi'(M_-)^{-1}M_-^{-1} \sum_{\ell=0}^\infty
        M_-^{-\ell} z^\ell,
      \end{align*}
    } where we have employed the geometric series expression
    $1/(1-a) = \sum_{\ell = 0}^\infty a^{\ell}$.  
    Plugging this  into $[z^\ell]f(z)$ above, 
  \begin{align*}
    &  [z^\ell]f(z) = [z^\ell]\left(\frac{1}{\pi(z)}e_0 \right)+ [z^{\ell-1}]
      \left( \frac{1}{\pi(z)} (e_1 - B e_0)\right)
    \\
    & =
      -(\pi'(M_+)^{-1}M_+^{-1}
      M_+^{-\ell} + \pi'(M_-)^{-1}M_-^{-1}
      M_-^{-\ell}) e_0
    \\
    &\qquad -(\pi'(M_+)^{-1}M_+^{-1} 
      M_+^{-\ell+1} +
    \\
    & \qquad \qquad +\pi'(M_-)^{-1}M_-^{-1} 
      M_-^{-\ell+1}) (-e_1 + B e_0) , \;  \ell\ge 2.
  \end{align*}

  To prove convergence, we need a bit more work to bound the
  eigenvalues of each $M_\pm^{-1}$. 
 First, note that 
    \begin{align*}
      M_\pm^{-1} & = \frac{1}{2}\left( B \pm \sqrt{B^2 - 2\Lambda}\right).
    \end{align*}
    thus, $(M_\pm^{-1})^\top = M_\pm^{-1}$ have real eigenvalues. Any of these roots, 
    $R$, satisfies
    \begin{equation*}
      p(R) = R^2 - BR + \Lambda =0.
    \end{equation*}
    Therefore, for each eigenvector $x$ of $R$ such that $x^\top x = 1$, it holds
    that
    \begin{equation*}
      x^\top R^2 x - x^\top BR x + x^\top \Lambda x = 0. 
    \end{equation*}
    which implies 
    \begin{equation*}
      \lambda^2 - \lambda x^\top B x + x^\top \Lambda x = 0,
    \end{equation*}
    for the associated eigenvalue.  From Lagrange's
    theorem~\cite{PB-MM-DS:17}, the absolute values of the roots of this
    polynomial will be bounded by the sum of the coefficients
    $|x^\top (I-\delta \eta \overline{A}^f)x| + |x^\top \delta \eta \overline{A}^d
    x|$.  Further, from Courant-Fischer's theorem, and for $\delta \eta$
    sufficiently small, we have
    \begin{align*}
      & |x^\top (I-\delta \eta \overline{A}^f)x| + |x^\top \delta \eta
        \overline{A}^d x| \le 
        1 - \delta \eta \subscr{\lambda}{min}(\overline{A}^f)
      \\&
      \qquad + \delta \eta
      \max\{\subscr{\lambda}{max}(\overline{A}^d),|\subscr{\lambda}{min}(\overline{A}^d)|\}
      \triangleq \gamma. 
    \end{align*}
    Therefore, the previous inequality is less than one if
    $\subscr{\lambda}{min}(\overline{A}^f) >
    \max\{\subscr{\lambda}{max}(\overline{A}^d),|\subscr{\lambda}{min}(\overline{A}^d)|\}
    $.

  From here, one can take the norm of each coefficient, $e_\ell$ and
  upper-bound it in terms of the eigenvalues of the matrix roots of
  $\pi(z)$, which are related to those of $\overline{A}^f$ and
  $\overline{A}^d$. Putting $M_1 \equiv M_+$,
     and $M_2 \equiv M_-$, we obtain
{\small
    \begin{align*}
      & \| e_\ell \| \le
      \\
      &\| \sum_{i=1}^2 \left(-
        \pi'(M_i)^{-1} M_i^{-(\ell+1)}e_0 +  \pi'(M_i)^{-1} M_i^{-\ell} (e_1 - B
        e_0)\right)\|
      \\
      & \le
        \sum_{i=1}^2\left( \| e_0 \| \|\pi'(M_i)^{-1}\| \|
        M_i^{-1}\|^{\ell+1} + \dots \right.
      \\ & \left. \qquad \qquad
           \qquad \dots + \| e_1 - B e_0 \|
           \|\pi'(M_i)^{-1}\| \| M_i^{-1}\|^{\ell} \right)
      \\ & \qquad = O(
           \max |\lambda(M_+^{-1})|^\ell) \le O(\gamma^\ell).
    \end{align*}}
\end{proof}

\begin{table}[htb]
  \centering
  \caption{1-hop distributed algorithms with delayed
    updates.}\label{tab:delayed-algos}
  \begin{tabular}{|c|c|c|}
    \hline
    Update
    & Equation
    & Description of delay
    \\
    \hline
    all-neighbors-delayed
    & \eqref{eq:homogeneous}
    & \parbox[t]{.4\linewidth}{Control
      information from
      both 1-hop and
      2-hop neighbors
      delayed by 1 timestep}
    \\
      \hline
    $2\setminus \! 1$-neighbors-delayed
    & \eqref{eq:discrete-u2}
    & \parbox[t]{.4\linewidth}{Control
      information from
      2-hop neighbors that are not  1-hop too
      delayed by 1 timestep}
    \\
    \hline
    $2$-neighbors-delayed
    & \eqref{eq:discrete-u3}
    & \parbox[t]{.4\linewidth}{Control
      information from all
      2-hop  neighbors
      delayed by 1 timestep}
    \\
    \hline      
  \end{tabular}
\end{table}

\begin{example}\longthmtitle{Numerical simulation of 1-hop distributed
    algorithms with delayed updates}
  We illustrate here the performance of the various 1-hop distributed
  algorithms with delayed updates discussed in this section,
  summarized for convenience in Table~\ref{tab:delayed-algos}.  We
  again consider 10 agents, this time deployed over a set
  $Q \subseteq \real^3$ with 20 by 20 by 20 units and with density
  $ \phi_4(q,t) = e^{-\left( \left(q_x - 4
        \cos\left(\frac{t}{5}\right)\right)^2 + \left(q_y - 4
        \sin\left(\frac{t}{5}\right)\right)^2 + \left(q_z\right)^2
    \right)}$.
  We set $\epsilon= 0.05$, $\kappa=1$, $T=31.5$s, and $dt=0.1$s.
  Figure~\ref{fig:10-agents-delayed} shows the result of the
  simulations.  Consistent with the use of the control information of
  each algorithm, cf. Table~\ref{tab:delayed-algos}, we observe that
  the coordination algorithms with the
  $2\setminus \! 1$-neighbors-delayed update converges faster than the
  $2$-neighbors-delayed update, which in turn converges faster than
  the all-neighbors-delayed one. This is also verified by the total
  costs for each run.
  \oprocend
\end{example}

\begin{figure*}[htb]
  \centering
  \subfigure[]{\includegraphics[width = 0.28
    \linewidth]{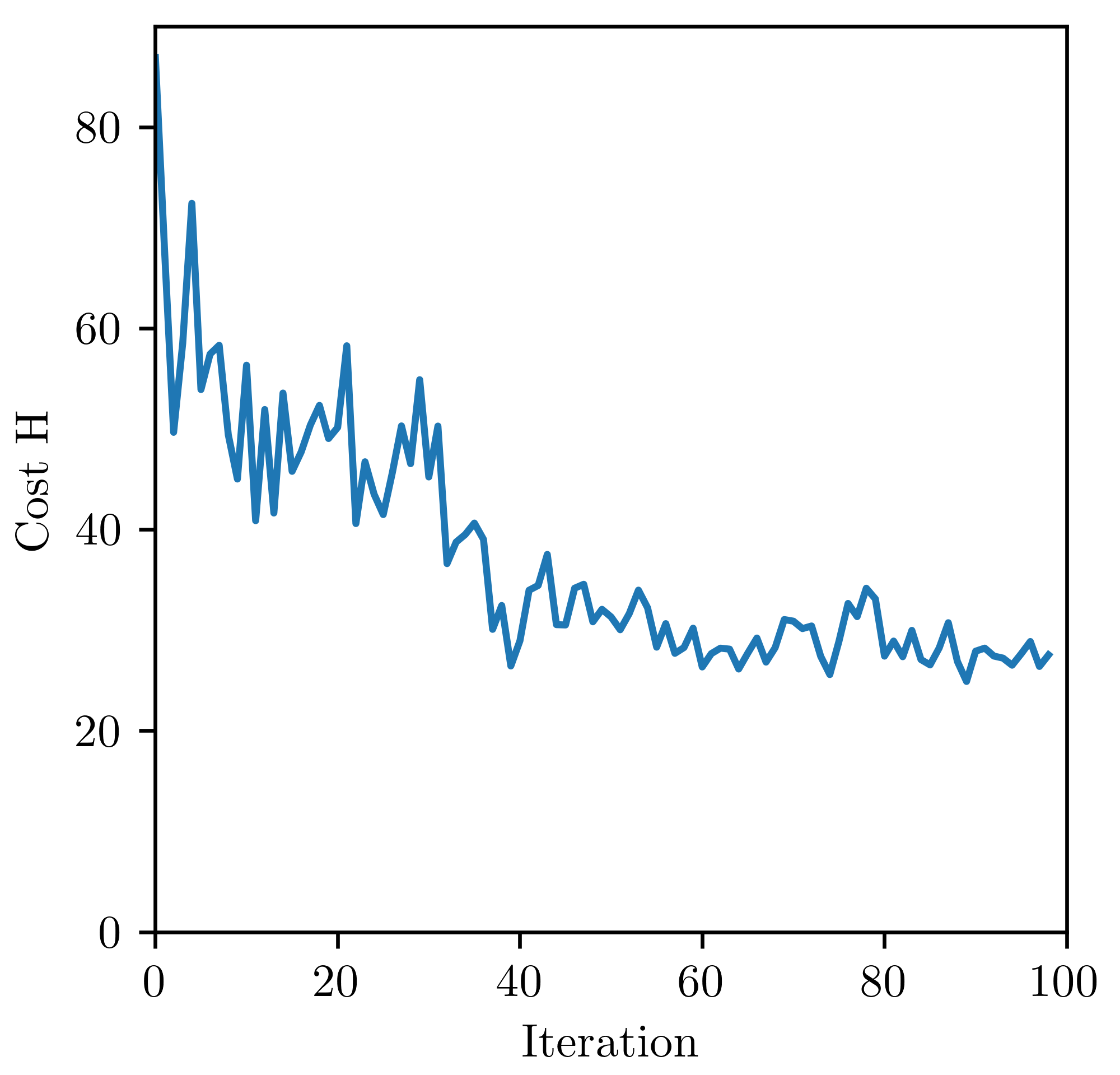}}
  \subfigure[]{\includegraphics[width = 0.28
    \linewidth]{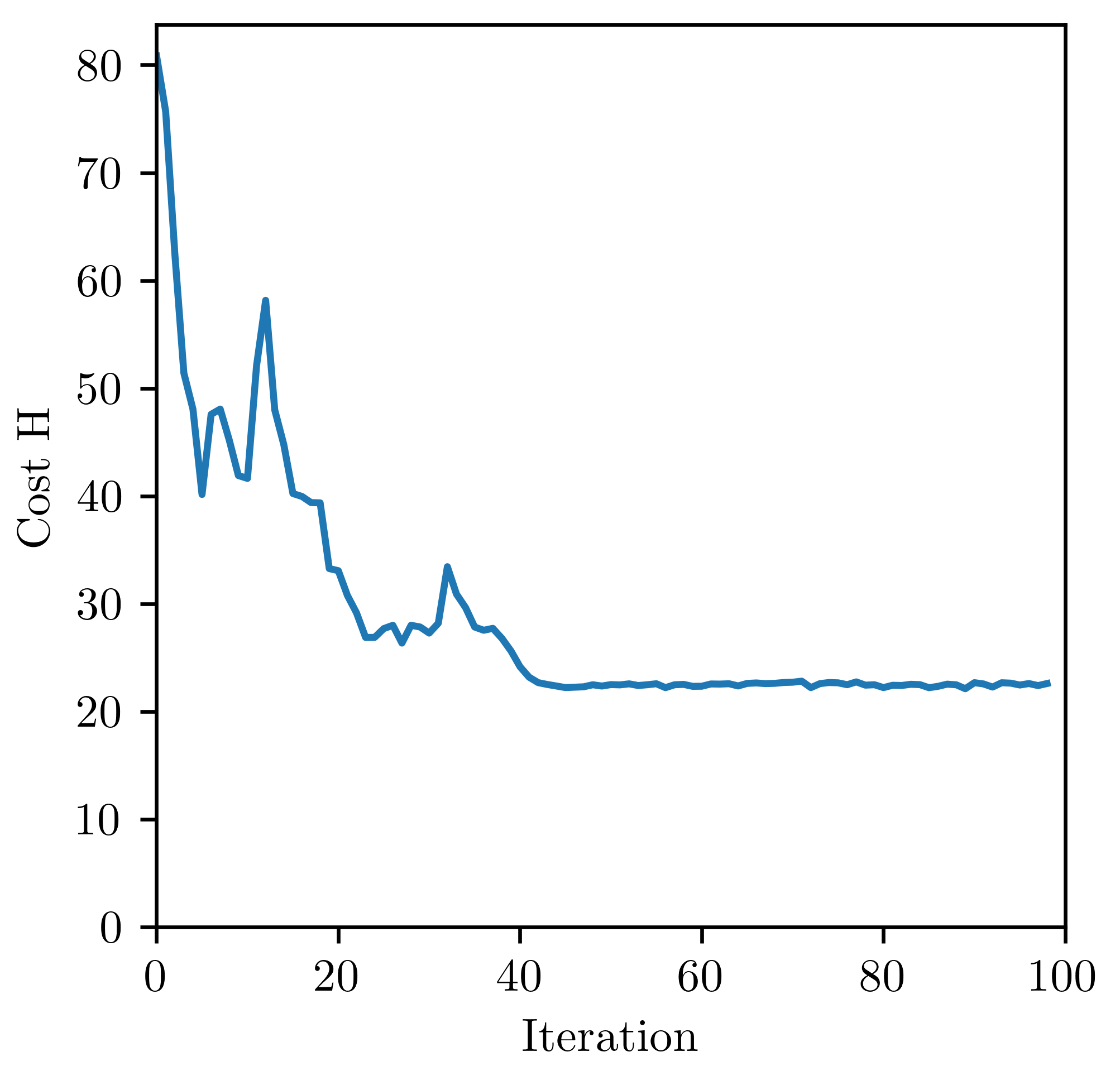}}
  \subfigure[]{\includegraphics[width = 0.28
    \linewidth]{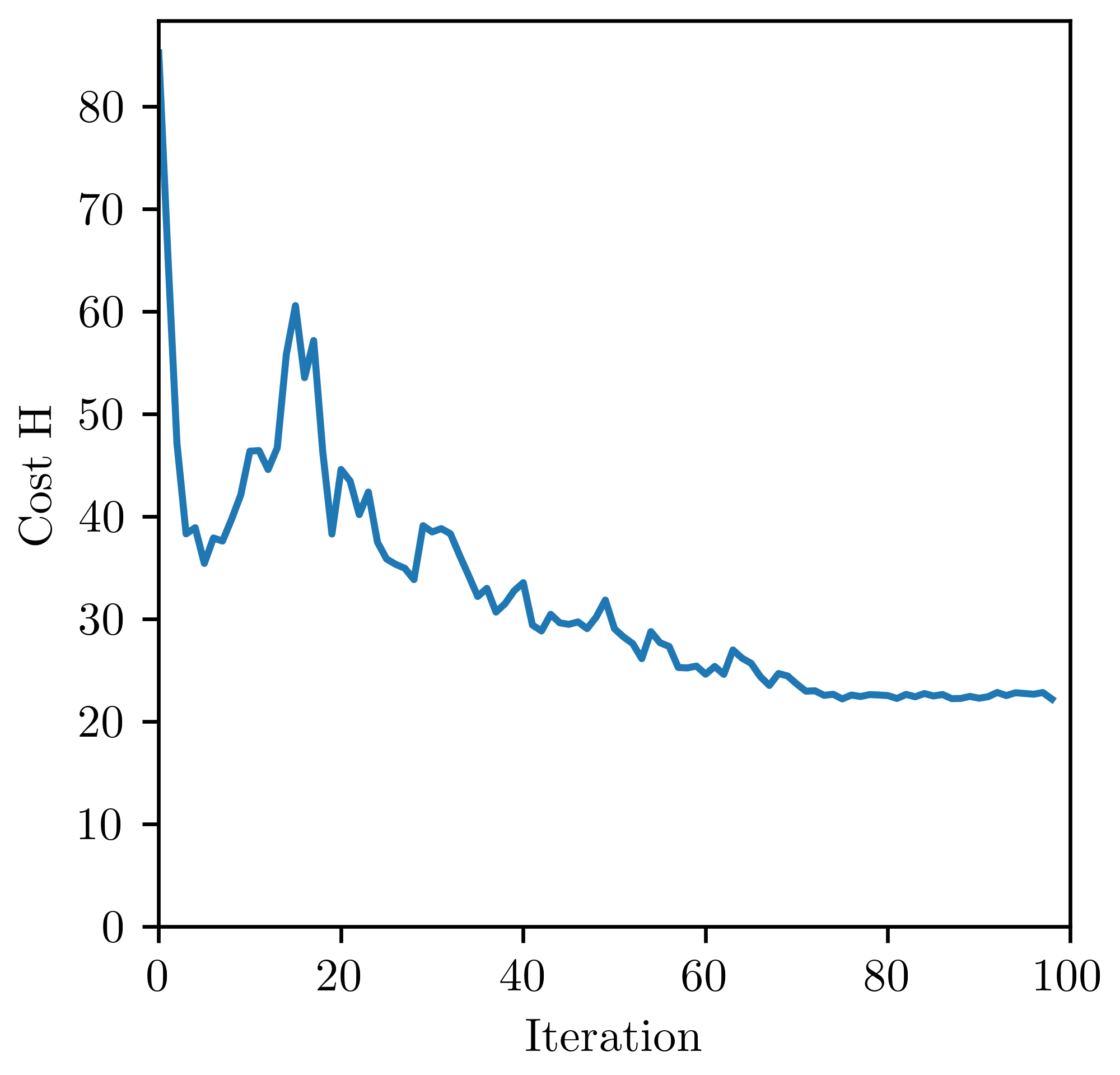}}
  \caption{Evolution of the coverage control metric~\eqref{eq:
      location cost} with $n=10$ agents and dynamic density
    function~$\phi_4$ under the (a) all-neighbors-delayed
    update~\eqref{eq:homogeneous}, with total cost 3687.5 (b)
    $2\setminus \! 1$-neighbors-delayed update~\eqref{eq:discrete-u2},
    with total cost 2876.11, and (c) $2$-neighbors-delayed
    update~\eqref{eq:discrete-u3}, with total cost
    3175.49.}\label{fig:10-agents-delayed}
  \vspace*{-1ex}
\end{figure*}

\begin{figure}[htb]
  \centering
  \subfigure[]{\includegraphics[width=.475\linewidth]{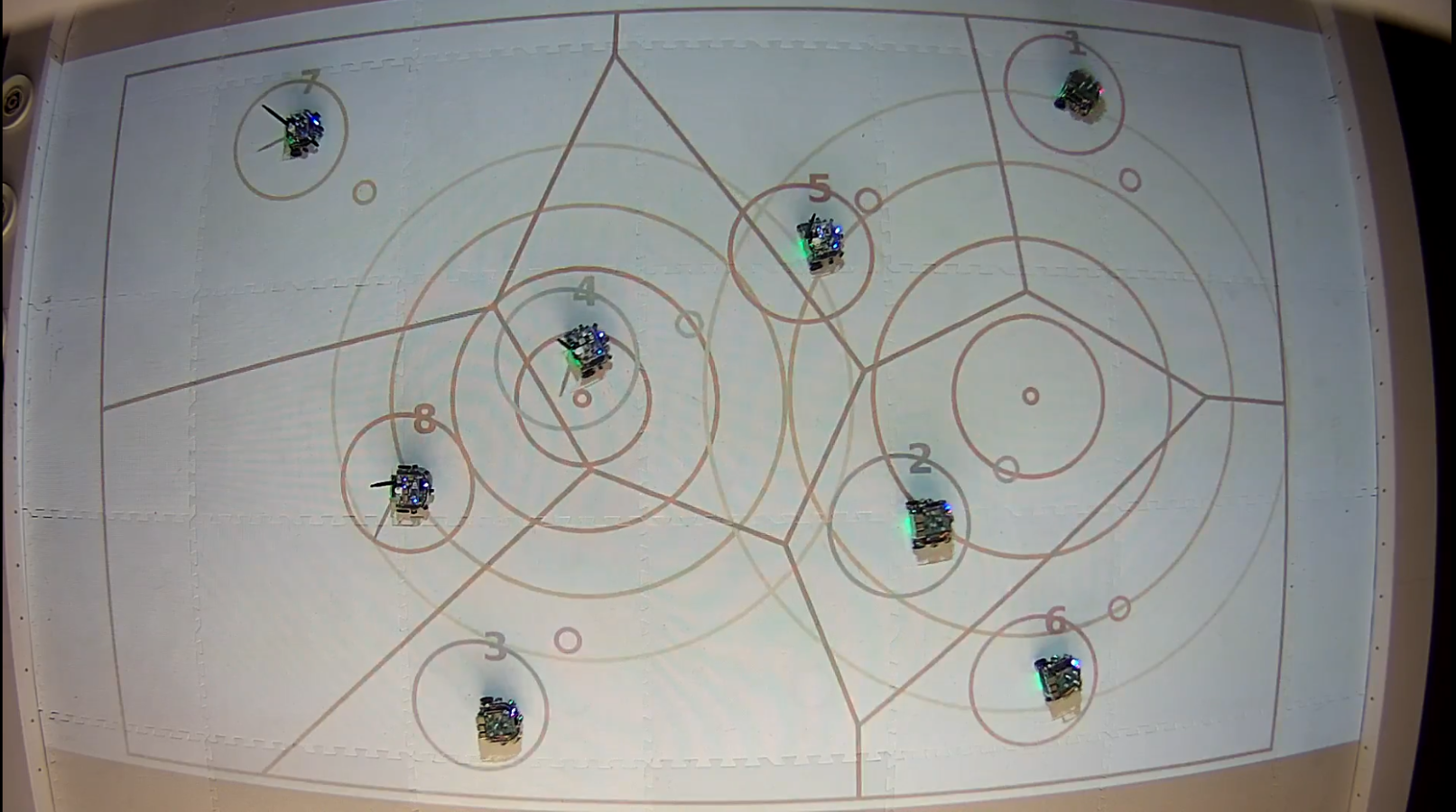}}
  \subfigure[]{\includegraphics[width=.475\linewidth]{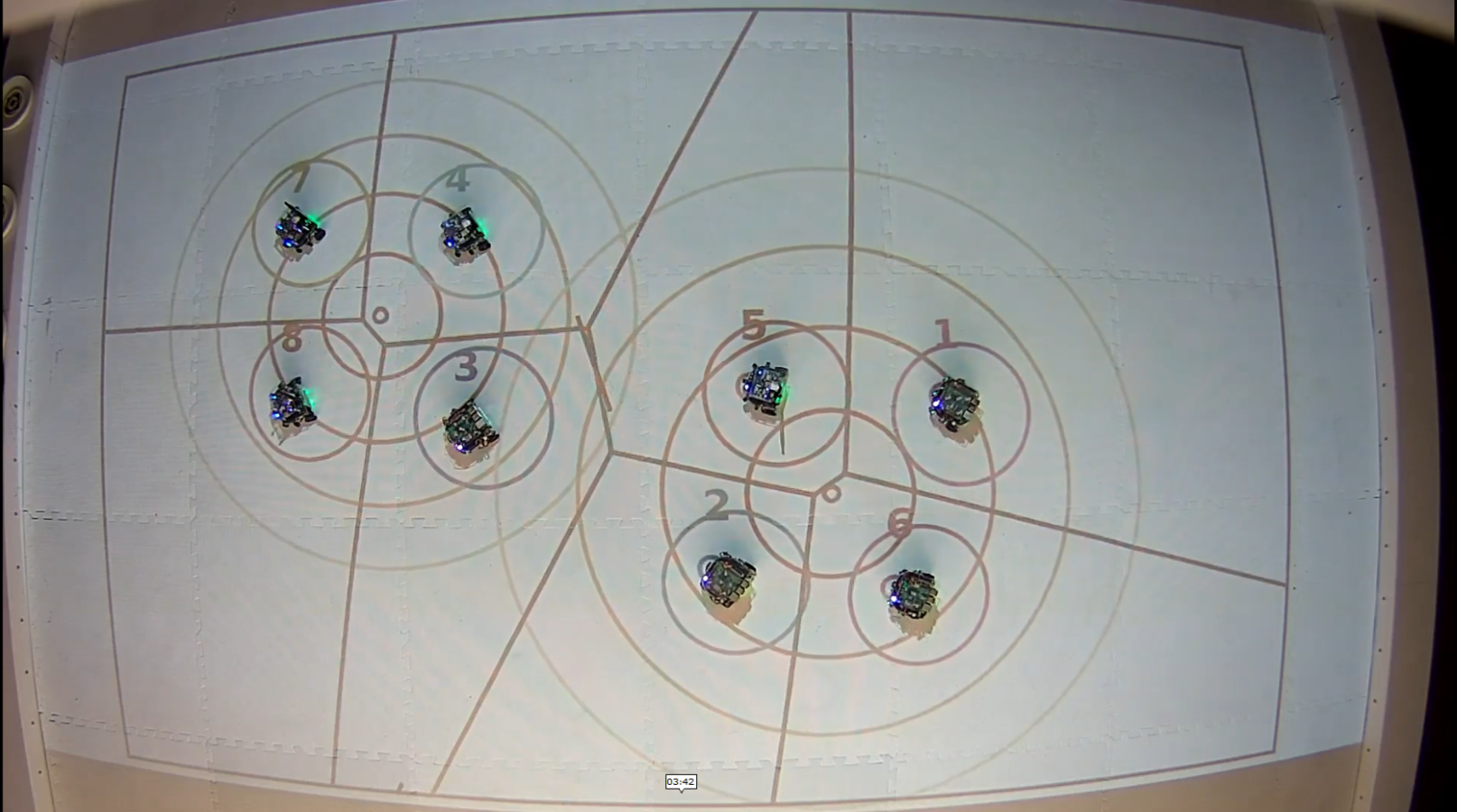}}
  \\
  \subfigure[]{\includegraphics[width=.475\linewidth]{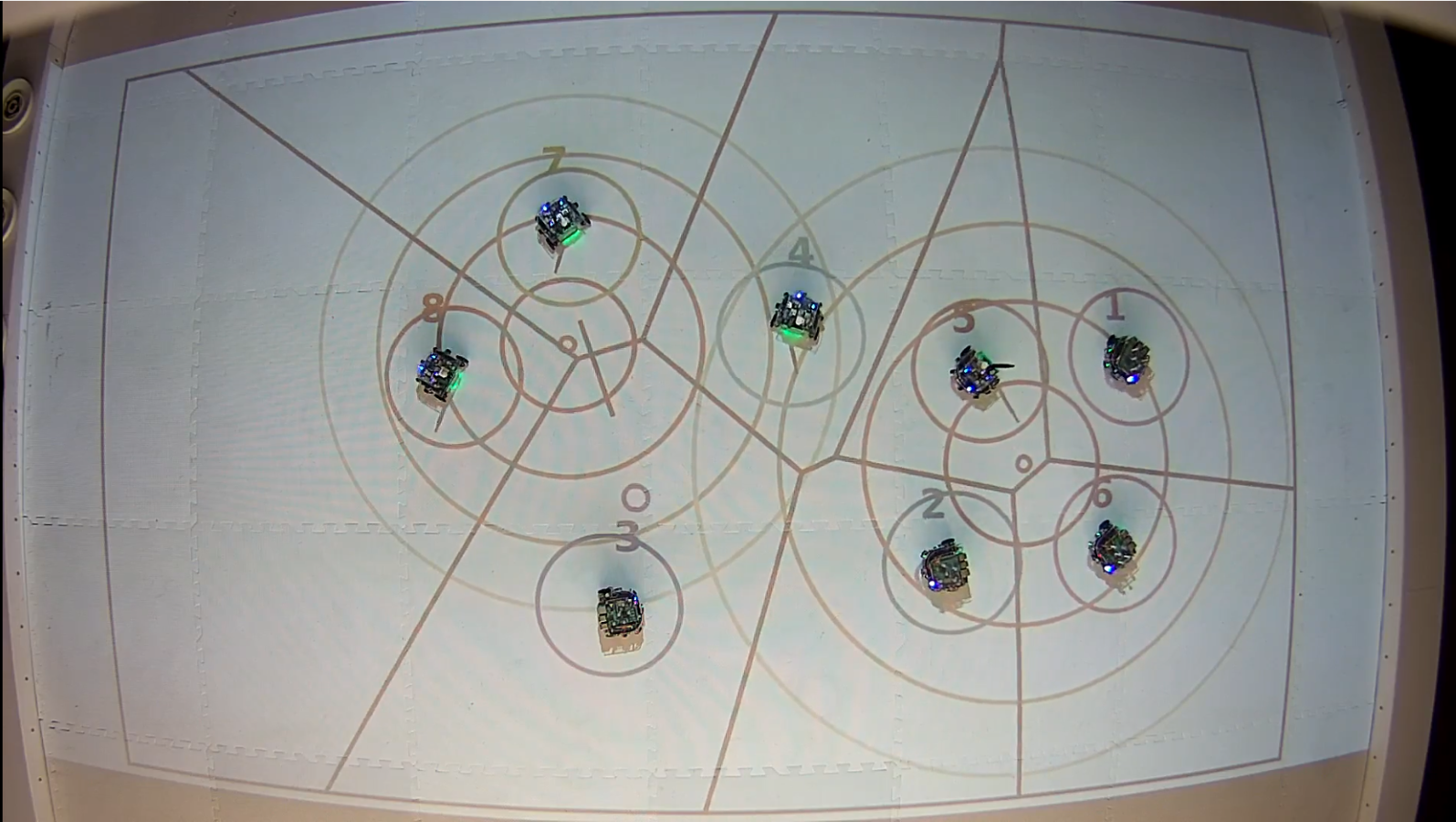}}
  \subfigure[]{\includegraphics[width=.475\linewidth]{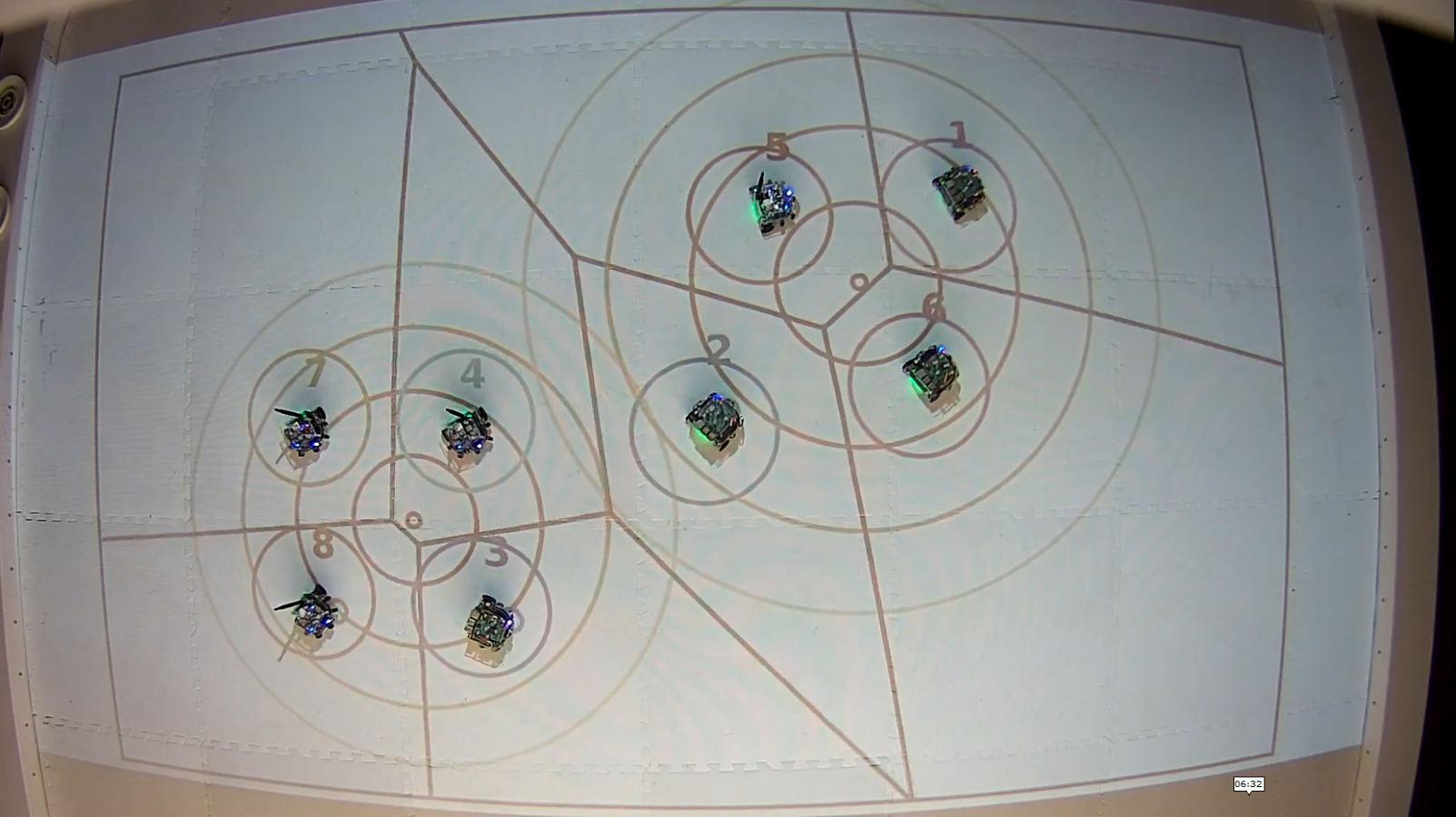}}
  \caption{Hardware experiments with $n=8$ differential drive robots  performing
    time-varying coverage control in the Robotarium, a remotely
    accessible swarm robotics testbed~\cite{SW-PG:20}.
    The robots execute the \tvd{sp}{\epsilon}
  coordination algorithm with the $2\setminus \! 1$-neighbors-delayed
  update~\eqref{eq:discrete-u2}.
 (a)  initialization, (b)-(d)  snapshots during the execution.
}\label{fig:robotarium}
\vspace*{-2ex}
\end{figure}

\section{Conclusions}\label{sec:conclusions}
We have presented a novel class of multi-agent coordination algorithms
for dynamic coverage control with time-varying density functions.  Our
approach is based on singular perturbation theory, specifically on
having agent position and control decision updates run at two
different timescales. The ratio between the (slow) timescale for agent
motion and the (fast) timescale for control updates is the
perturbation parameter. The inverse of this parameter has the natural
interpretation of the relative speed of the communication service with
respect to physical motion.  The resulting algorithm can be
implemented either in continuous or discrete time and is 2-hop
distributed over the Delaunay graph.  We have shown that, for small
enough values of the perturbation parameter, it achieves the same
performance as its centralized counterpart, i.e., tracking of a
centroidal Voronoi tessellation.  We have also proposed discrete-time
versions that rely only on 1-hop communication at the cost of having
to use delayed information, resulting in the all-neighbors-,
$2\setminus \! 1$-neighbors-, and $2$-neighbors-delayed variants, and
have formally established their asymptotic convergence guarantees.

\appendix

\begin{lemma}\label{le:roots-matrix}
  Consider the matrices 
  $B= I - \delta \eta \overline{A}^f$ and $\Lambda = \delta \eta \overline{A}^d$,
  where $\overline{A}^f \succ 0$, and both
  $\overline{A}^f, \overline{A}^d$ are symmetric. Then
  $B^2 - 2 \Lambda \succ 0$ for $\delta \eta$ sufficiently small.
  \end{lemma}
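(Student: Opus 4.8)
The plan is to expand $B^2 - 2\Lambda$ explicitly, recognize it as a symmetric $O(\delta \eta)$ perturbation of the identity, and conclude positive definiteness by bounding the associated Rayleigh quotient from below. First I would substitute $B = I - \delta \eta \overline{A}^f$ to obtain
\[
  B^2 - 2\Lambda = I - 2\delta \eta\,(\overline{A}^f + \overline{A}^d) + \delta \eta^2\,(\overline{A}^f)^2 .
\]
Since $I$, $\overline{A}^f$, $\overline{A}^d$ and $(\overline{A}^f)^2$ are all symmetric, this matrix is symmetric, so it suffices to show that $x^\top (B^2 - 2\Lambda) x > 0$ for every unit vector $x$.

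Next I would estimate, for $\|x\| = 1$,
\[
  x^\top (B^2 - 2\Lambda) x = 1 - 2\delta \eta\, x^\top(\overline{A}^f + \overline{A}^d) x + \delta \eta^2\, x^\top (\overline{A}^f)^2 x .
\]
The quadratic term is nonnegative because $(\overline{A}^f)^2 \succeq 0$, and $|x^\top \overline{A} x| \le \|\overline{A}\|$ for any symmetric $\overline{A}$ (spectral norm), so the right-hand side is at least $1 - 2\delta \eta(\|\overline{A}^f\| + \|\overline{A}^d\|)$. Hence $B^2 - 2\Lambda \succ 0$ whenever $\delta \eta < 1/\big(2(\|\overline{A}^f\| + \|\overline{A}^d\|)\big)$, which both proves the claim and makes the required smallness of $\delta \eta$ explicit; note that for symmetric $\overline{A}^d$ one has $\|\overline{A}^d\| = \max\{\subscr{\lambda}{max}(\overline{A}^d),|\subscr{\lambda}{min}(\overline{A}^d)|\}$, matching the quantity appearing in Theorem~\ref{th: hybrid discretized}.

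I do not expect a genuine obstacle: the argument is at heart a continuity statement, since $\delta \eta \mapsto B^2 - 2\Lambda$ is continuous with value $I$ at $\delta \eta = 0$ and the cone of positive definite matrices is open. The only point needing care is that $\overline{A}^d$ need not be sign-definite, so its contribution cannot be discarded on the grounds of a favorable sign and must instead be controlled through its norm, as above. The hypothesis $\overline{A}^f \succ 0$ is not strictly required for positive definiteness of $B^2 - 2\Lambda$ at small $\delta \eta$ (symmetry alone suffices), but it is what makes the conclusion useful downstream, where $\sqrt{B^2 - 2\Lambda}$ and $\pi'(M_\pm)$ are invoked.
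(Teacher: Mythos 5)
Your proof is correct and follows essentially the same route as the paper: both expand $B^2 - 2\Lambda = I - 2\delta\eta(\overline{A}^f + \overline{A}^d) + (\delta\eta)^2(\overline{A}^f)^2$ and bound the Rayleigh quotient from below (the paper invokes Courant--Fischer, which is the same estimate). Your version adds the minor refinements of making the admissible threshold $\delta\eta < 1/\bigl(2(\|\overline{A}^f\| + \|\overline{A}^d\|)\bigr)$ explicit and of controlling the possibly indefinite $\overline{A}^d$ via its spectral norm rather than $\lambda_{\max}$ alone, but the argument is the same.
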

  \begin{proof}
    Note that
    $ B^2 - 2 \Lambda = I - 2 \delta \eta \overline{A}^f + (\delta
    \eta)^2 (\overline{A}^{f})^2- 2\delta\eta \overline{A}^d$ is
    symmetric and has real eigenvalues.  By the Courant-Fischer's
    theorem, it holds that
    \begin{align*}
      &\subscr{\lambda}{min}(B^2 - 2\Lambda)
        = \min_{x \cdot x=1} x^\top (B^2 - 2 \Lambda ) x \\
      & = \min_{x \cdot x=1}\{
        1 - 2\delta \eta x^\top \overline{A}^f x + (\delta \eta)^2 x^\top
        (\overline{A}^{f})^2x - 2\delta \eta x^\top \overline{A}^dx\} 
      \\
      & \ge 1 - 2 \delta \eta \subscr{\lambda}{max}(\overline{A}^f) +
        (\delta  \eta)^2 \subscr{\lambda}{min}((\overline{A}^{f})^2) -
        2 \delta  \eta \subscr{\lambda}{max}(\overline{A}^d),
      \end{align*}
      as all involved matrices are symmetric.  Thus, if $\delta  \eta$ is
      sufficiently small, then $B-2\Gamma \succ 0$.
    \end{proof}

  \end{document}